\newcommand{\pk}[1]{\textcolor{black}{#1}}
\theoremstyle{plain}
\newtheorem{theorem}{Theorem}[section]
\newtheorem{lemma}[theorem]{Lemma}
\newtheorem*{lemma*}{Lemma}
\theoremstyle{definition}
\newtheorem{defn}{Definition}[section]
\newtheorem*{defn*}{Definition}
\theoremstyle{remark}
\DeclareMathOperator*{\argmax}{arg\,max}
\tikzstyle{startstop} = [rectangle, draw, rounded corners, align=center, minimum width=3cm, minimum height=1cm,text centered]
\tikzstyle{decision} = [diamond, draw, fill=blue!20, 
\tikzstyle{block} = [rectangle, draw, fill=blue!10, align=center, rounded corners, minimum width=3cm, minimum height=1cm]
\tikzstyle{blockcast} = [rectangle, draw, fill=red!10, align=center, rounded corners, minimum width=3cm, minimum height=0.45cm]
\tikzstyle{line} = [draw, -latex']
\tikzstyle{cloud} = [draw, ellipse,fill=red!20, node distance=3cm,
\title{Parallel Restarted SPIDER - Communication Efficient Distributed Nonconvex Optimization with Optimal Computation Complexity}
\author{Pranay Sharma$^{*}$, Swatantra Kafle$^{*}$, Prashant Khanduri$^{*}$, \\
Saikiran Bulusu$^{*}$, Ketan Rajawat$^{\dagger}$ and Pramod K. Varshney$^{*}$\\
$^{*}$Department of Electrical Engineering and Computer Science,\\
Syracuse University, Syracuse, New York-13244\\ $^{\dagger}$Department of Electrical Engineering, \\
IIT Kanpur, Kanpur, India-208016.\\
\{psharm04, skafle, pkhandur, sabulusu, varshney\}@syr.edu, and ketan@iitk.ac.in.}
\begin{document}
\maketitle
\begin{abstract}
In this paper, we propose a distributed algorithm for stochastic smooth, non-convex optimization. We assume a worker-server architecture where $N$ nodes, each having $n$ (potentially infinite) number of samples, collaborate with the help of a central server to perform the optimization task. The global objective is to minimize the average of local cost functions available at individual nodes. The proposed approach is a non-trivial extension of the popular parallel-restarted SGD algorithm, incorporating the optimal variance-reduction based SPIDER gradient estimator into it. We prove convergence of our algorithm to a first-order stationary solution. The proposed approach achieves the best known communication complexity $O(\epsilon^{-1})$ along with the optimal computation complexity. For finite-sum problems (finite $n$), we achieve the optimal computation (IFO) complexity $O(\sqrt{Nn}\epsilon^{-1})$. For online problems ($n$ unknown or infinite), we achieve the optimal IFO complexity $O(\epsilon^{-3/2})$. In both the cases, we maintain the linear speedup achieved by existing methods. This is a massive improvement over the $O(\epsilon^{-2})$ IFO complexity of the existing approaches. Additionally, our algorithm is general enough to allow non-identical distributions of data across workers, as in the recently proposed federated learning paradigm.
\end{abstract}



\section{Introduction}
The current age of Big Data is built on the foundation of distributed systems, and efficient distributed algorithms to run on these systems \cite{Xing2016strategies}. With the rapid increase in the volume of the data being fed into these systems, storing and processing all this data at a central location becomes infeasible. Such a central \textit{server} requires a gigantic amount of computational and storage resources \cite{Xing2015Petuum}. Also, the input data is usually collected from a myriad of sources, which might inherently be distributed \cite{konevcny16federated}. Transferring all this data to a single location might be expensive. Depending on the sensitivity of the underlying data, this might also lead to concerns about maintaining the privacy and anonymity of this data \cite{leaute2013protecting}. Therefore, even in situations where it is possible to have central servers, it is not always desirable.

One of the factors which has made this Big Data explosion possible is the meteoric rise in the capabilities, and the consequent proliferation, of end-user devices \cite{konevcny16federated}. These \textit{worker} nodes or machines have significant storage and computational resources. One simple solution to the central server problem faced by distributed systems is that the server offloads some of its conventional tasks to these workers \cite{Xing2015Petuum}. More precisely, instead of sharing its entire data with the server, which is expensive communication-wise, each worker node carries computations on its data itself. The results of these ``local'' computations are then shared with the server. The server aggregates them to arrive at a ``global'' estimate, which is then shared with all the workers. The following distributed optimization problem, which we solve in this paper, obeys this governing principle.

\subsection{Problem}
\begin{align}
\min_{x \in \mathbb{R}^d} f(\mathbf{x}) \triangleq \frac{1}{N} \sum_{i=1}^N f_i(\mathbf{x}), \label{eq_problem}
\end{align}
where $N$ is the number of worker nodes. The local function corresponding to node $i$, $f_i (\mathbf{x})$ is a smooth, potentially non-convex function. We consider two variants of this problem.
\begin{itemize}
    \item \textbf{Finite-Sum Case:} Each individual node function $f_i$ in \eqref{eq_problem} is an empirical mean of function values corresponding to $n$ samples. Therefore, the problem is of the form
    \begin{align}
        \min_{x \in \mathbb{R}^d} f(\mathbf{x}) \triangleq \frac{1}{N} \sum_{i=1}^N \frac{1}{n} \sum_{j=1}^n f_i(\mathbf{x}, \xi_j), \label{eq_problem_finite}
    \end{align}
    where $\xi_j$ denotes the $j$-th sample and $f_i(\mathbf{x}, \xi_j)$ is the cost corresponding to the $j$-th sample.
    \item \textbf{Online Case:} Each individual node function $f_i$ in \eqref{eq_problem} is an expected value. Hence, the problem has the form
    \begin{align}
        \min_{x \in \mathbb{R}^d} f(\mathbf{x}) \triangleq \frac{1}{N} \sum_{i=1}^N \mathbb{E}_{\xi \in \mathcal{D}_i} f_i(\mathbf{x}, \xi), \label{eq_problem_online}
    \end{align}
    where $\mathcal{D}_i$ denotes the distribution of samples at the $i$-th node. Note that $\mathcal{D}_i$ can be different across different workers. This scenario is the popular \textbf{federated learning} \cite{konevcny16federated} model.
\end{itemize}
Throughout the paper, we assume that given a point $\mathbf{y}$ each node can choose samples $\xi$ independently, and the stochastic gradients of these sample functions are unbiased estimators of the actual gradient.
\pk{Generally, the optimality of a nonconvex problem is measured in terms of an $\epsilon$-stationary point} which is defined as a point $\mathbf{x}$ such that
\begin{align*}
    \left\| \nabla f(\mathbf{x}) \right\|^2 \leq \epsilon.
\end{align*}
\pk{which is updated to 
$$ \mathbb{E} \left\| \nabla f(\mathbf{x}) \right\|^2 \leq \epsilon,$$ 
for stochastic algorithms, where the expectation is taken to account for the randomness introduced by the stochastic nature of the algorithms. The point $\mathbf{x}$ which satisfies the above is referred to as a first-order stationary ({\bf FoS}) point of \eqref{eq_problem}. Note that the above quantity encodes only  the gradient error and ignores the consensus error across different nodes. Since, the algorithm proposed in this work is a distributed algorithm, it is appropriate to include consensus error term in the definition of the {\bf FoS} point \cite{hong19arXiv}. For this purpose, we modify the definition of {\bf FoS} point and include consensus error term in the definition as:
\begin{defn}{$\epsilon$-\textbf{First-order stationary ($\epsilon$-FoS) point}} \cite{hong19arXiv}
Let $\{\mathbf{x}_i\}_{i = 1}^N$ with $\mathbf{x}_i \in \mathbb{R}^d$ be the local iterates at $N$ nodes and $\bar{\mathbf{x}} = \frac{1}{N}\sum_{i = 1}^N \mathbf{x}_i$, then we define the $\epsilon$-{\bf FoS} point $\bar{\mathbf{x}}$ as 
\begin{align*}
   \mathbb{E} \left\| \nabla f \left( \bar{\mathbf{x}} \right) \right\|^2 + \frac{1}{N} \sum_{i=1}^N \mathbb{E} \left\| \mathbf{x}_{i} - \bar{\mathbf{x}} \right\|^2  \leq \epsilon. 
\end{align*}
Note that the expectation is over the stochasticity of the algorithm. All our results are in terms of convergence to an $\epsilon$-\textbf{FoS} point.
\label{Def: FoS}
\end{defn}
Note that the above Definition \ref{Def: FoS} implies that an $\epsilon$-{\bf FoS} point will ensure that $  \mathbb{E} \left\| \nabla f \left( \bar{\mathbf{x}} \right) \right\|^2 \leq \epsilon$.} The complexity of the algorithm is measured in terms of communication and computation \pk{complexity, which are defined next.}
\begin{defn}{\textbf{Computation Complexity:}}
\label{Def: Comp}
In the Incremental First-order Oracle (IFO) framework \cite{agarwal14arxiv}, given a sample $\xi$ at node $i$ and a point $\mathbf{x}$, the oracle returns $(f_i(\mathbf{x}; \xi), \nabla f_i(\mathbf{x}; \xi))$. Each access to the oracle is counted as a single IFO operation. The sample or computation complexity of the algorithm is hence, the total (aggregated across nodes) number of IFO operations to achieve an $\epsilon$-FoS solution.
\end{defn}
\begin{defn}{\textbf{Communication Complexity:}}
\label{Def: Comm}
In one round of communication between the workers and the server, each worker sends its local vector $\in \mathbb{R}^d$ to the server, and receives an aggregated vector of the same dimension in return. The communication complexity of the algorithm is the number of such communication rounds required to achieve an $\epsilon$-FoS solution.
\end{defn}

\subsection{Related Work}

\subsubsection{Stochastic Gradient Descent (SGD)}
SGD is workhorse of the modern Big-Data machinery. Before moving to the discussion of the literature most relevant to our work, we provide a quick review of the most basic results using SGD. For strongly convex problems, to obtain an $\epsilon$-accurate solution, 
$O(1/ \epsilon)$ IFO-calls are required \cite{nemirovski09siam}. For general convex \cite{moulines11neurips} problems, to achieve an $\epsilon$-accurate solution, $O(1/\epsilon^2)$ IFO calls are required. For nonconvex problems \cite{ghadimi13siam}, to achieve an $\epsilon$-stationary solution, $O(1/\epsilon^2)$ IFO calls are required. In the absence of additional assumptions on the smoothness of stochastic functions, this bound is tight \cite{Carmon2019lower_Arxiv}. With additional assumptions, this bound can be improved using variance reduction methods, which we discuss in Section \ref{subsec_var_red}.

\subsubsection{Distributed Stochastic Gradient Descent (SGD)}
There is a vast and ever-growing body of literature on distributed SGD. However, here we limit our discussion almost exclusively to work in the domain of stochastic nonconvex optimization. A classical method to solve \eqref{eq_problem} is Parallel Mini-batch SGD \cite{dekel12jmlr}, \cite{li14neurips}. Each worker, in parallel, samples a (or a mini-batch of) local stochastic gradient(s). It uses this gradient estimator to update its local iterate and sends the latter to the server. The server node aggregates the local iterates, computes their average and broadcasts this average to all the workers. The workers and the server repeat the same process iteratively.\footnote{Alternatively, the worker nodes might send their local gradient estimators to the server, which then averages these, updates its iterate using the average, and broadcasts the new iterate to the workers.} The approach achieves an $\epsilon$-FoS point, with a linear speedup with the number of workers. This is because the total IFO complexity $O(\epsilon^{-2})$ is independent of the number of workers $N$. Hence each node only needs to compute $O(\frac{1}{N} \epsilon^{-2})$ gradients. However, the exchange of gradients and iterates between the workers and the server at each iteration results in a significant communication requirement, leading to communication complexity of $O(\frac{1}{N} \epsilon^{-2})$. To alleviate the communication cost, several approaches have recently been proposed. These include communicating compressed gradients to the server, as in quantized SGD \cite{wen17neurips_terngrad, qsgd17neurips} or sparsified SGD \cite{dryden16mlhpc, aji17arxiv}. The motivation behind using quantized gradient vectors (or sparse approximations of actual gradients) is to reduce the communication cost, while not significantly affecting the convergence rate. The number of communication rounds, however, still remains the same.

\textbf{Model Averaging:} To cut back on the communication costs further, the nodes might decide to make the communication and the subsequent averaging infrequent. At one extreme of this idea is \textbf{one-shot averaging}, in which averaging happens only once. All the nodes run SGD locally, in parallel, throughout the procedure. At the end of the final iteration, the local iterates are averaged and the average is returned as output. However, this has been shown, in some non-convex problems, to have poor convergence \cite{zhang16arxiv}. More frequent averaging emerges as the obvious next option to explore. For \textbf{strongly convex} problems, it was shown in \cite{stich2018arxiv} that model averaging can also achieve linear speedup with $N$, as long as averaging happens at least every $I = \frac{1}{N \epsilon}$ iterations. Following this positive result, a number of works have achieved similar results for \textbf{non-convex} optimization. The IFO complexity is still $O(\epsilon^{-2})$. However, savings on communication costs are demonstrated.

The resulting class of algorithms is referred to as \textbf{Parallel Restarted SGD}. The entire algorithm is divided into epochs, each of length $I$. Within each epoch, all the $N$ nodes run SGD locally, in parallel. All the nodes begin each epoch at the same point, which is the average of the local iterates at the end of the previous epoch. Essentially, each node runs SGD in \textit{parallel} for $I$ iterations. At the end of these local iterations, each worker sends its solution to the server. The server returns the average of these local iterates to the workers, each of which \textit{restarts} the local iterations from this new point. Hence the name Parallel Restarted SGD. In \cite{restarted_sgd19aaai}, the proposed approach achieves communication savings using model averaging. However, it works under the additional assumption of all the stochastic gradients having bounded second moment. This assumption is relaxed in \cite{yu19momentum}, which achieves further reduction in the communication requirement by adding momentum to the vanilla SGD run locally at the nodes. The communication cost is again improved in \cite{yu19icml_batchsize}, where the authors used dynamic batch sizes. This is achieved using a two-step approach. In the first step, for the special class of non-convex problems satisfying the Polyak-Lojasiewicz (PL) condition, exponentially increasing batch sizes are used locally. The fastest known convergence, with linear speedup, is thus achieved using only $O(\log (\epsilon^{-1}))$ communication rounds. Next, for general nonconvex problems, the algorithm proposed in the first step is used as a subroutine, to achieve linear speedup, while using only $O(\epsilon^{-1} \log (N^{-1} \epsilon^{-1}) ) = \tilde{O} (\epsilon^{-1})$ communication rounds, where $\tilde{O}(\cdot)$ subsumes logarithmic factors.

\begin{table}[h]
\centering
\begin{tabular}{l|c|c|c}
\hline
\multirow{2}{*}{Reference} & \multicolumn{2}{c|}{Communication Complexity} & \multirow{2}{*}{IFO Complexity} \\
\cline{2-3}
 & Identical $f_i(\cdot)$ & Non-identical $f_i(\cdot)$ & \\
\hline
\cite{dekel12jmlr} & $O\left( \frac{1}{N \epsilon^2} \right)$ & NA & \multirow{5}{*}{$O \left( \frac{1}{\epsilon^2} \right)$} \\
\cite{restarted_sgd19aaai}* & $O\left( \frac{1}{\epsilon^{3/2}} \right)$ & $O\left( \frac{1}{\epsilon^{3/2}} \right)$ & \\
\cite{jiang18neurips} & $O\left( \frac{N^2}{\epsilon} \right)$ & $O\left( \frac{\sqrt{N}}{\epsilon^{3/2}} \right)$ &  \\
\cite{yu19momentum} & $O\left( \frac{N}{\epsilon} \right)$ & $O\left( \frac{1}{\epsilon^{3/2}} \right)$ &  \\
\cite{yu19icml_batchsize} & $O\left( \frac{1}{\epsilon} \log (\frac{1}{N \epsilon}) \right)$ & NA & \\
\hline
This Paper & {\color{red}$O\left( \frac{1}{\epsilon} \right)$} & {\color{red}$O\left( \frac{1}{\epsilon} \right)$} & {\color{red} $\min \left\{ \frac{\sqrt{Nn}}{\epsilon}, \frac{1}{\epsilon^{3/2}} \right\}$ } \\
\hline
\end{tabular}
\caption{The Communication and IFO complexity of different distributed SGD algorithms to reach a $\epsilon$-FoS point, for the stochastic smooth non-convex optimization problem. Note that not all approaches are applicable to the more general setting where the distributions at different nodes are non-identical. This setting captures the recently proposed federated learning paradigm \cite{konevcny16federated}. *The approach in \cite{restarted_sgd19aaai} requires the additional assumption that the gradients have bounded second moments.}
\end{table}

Other approaches based on infrequent averaging include LAG \cite{chen18neurips_lag}. In LAG, during every iteration, the server requests \textit{fresh} gradients only from a subset of the workers, while for the remaining workers, it reuses the gradients received in the previous iteration. However, the approach has only been explored in the deterministic gradient descent setting. In \cite{cadambe19icml}, redundancy is introduced in the training data to achieve communication savings. The authors of \cite{jaggi18localSGD} provide a comprehensive empirical study of distributed SGD, with focus on communication efficiency and generalization performance. To the best of our knowledge, no improvement in the $O(\epsilon^{-2})$ IFO complexity benchmark for distributed stochastic non-convex optimization has been reported thus far.

\subsubsection{Variance Reduction}
\label{subsec_var_red}
For the sake of simplicity of the discussion in this section, we assume that all the samples in the finite-sum problem \eqref{eq_problem_finite} or the online problem \eqref{eq_problem_online} are available at a single node. To solve the finite-sum problem \eqref{eq_problem_finite}, where each $f_{i} (\cdot; \xi_j)$ is $L$-smooth and potentially non-convex, gradient descent
(GD) and SGD are the two classical approaches. In terms of per-iteration complexity, these form the two extremes: GD entails computing the full gradient at each node in each iteration, i.e., $O(N \times n)$ operations, while SGD requires only $O(1)$ computations per iteration. Overall, to reach an $\epsilon$-FoS point $\bar{\mathbf{x}}$, GD requires $O(N n \epsilon^{-1})$ \cite{nesterov98book}, while SGD requires $O(\epsilon^{-2})$ gradient evaluations. For large values of $N \times n$ (or in situations where $n$ is infinite as in the online setting \eqref{eq_problem_online}), SGD is the only viable option.

Empirically, SGD has been observed to have good initial performance, but its progress slows down near the solution. One of the reasons behind this, is the variance inherent in the stochastic gradient estimator used. A number of variance reduction estimators, for example, SAGA \cite{saga14neurips} and SVRG \cite{svrg13neurips}, have been proposed in the literature to ameliorate this problem. The algorithms based on these estimators, compute full (or batch) gradients at regular intervals, to ``guide'' the progress made by the intermediate SGD steps. This interleaving of GD and SGD steps has resulted in significantly improved convergence rates, for problems with mean-squared smoothness property\footnote{See Assumption (A1) in Section \ref{subsec_not_assum}.}. The IFO complexity required for finite-sum problems was first improved to $O((N n)^{2/3} \epsilon^{-1})$ in \cite{nonc_svrg16icml, allen16icml}, and then further improved to $O((N n)^{1/2} \epsilon^{-1})$ in \cite[SPIDER]{spider18neurips}, \cite[SPIDERBoost]{spiderboost18neurips}, \cite[SNVRG]{snvrg18neurips}, \cite[SARAH]{nguyen19arxiv_optimal}. Moreover, it is proved in \cite{spider18neurips} that $O((N n)^{1/2} \epsilon^{-1})$ is the optimal complexity for problems where $N \times n \leq O(\epsilon^{-2})$. Similarly, for online problems, variance reduction methods first improved upon SGD to achieve the IFO complexity of $O(\epsilon^{-5/3})$ \cite[SCSG]{lei16_scsg}, which was again improved to $O(\epsilon^{-3/2})$ \cite[SPIDER]{spider18neurips}, \cite{spiderboost18neurips}. Quite recently in \cite{Carmon2019lower_Arxiv}, it was shown that the IFO complexity of $O(\epsilon^{-3/2})$ is optimal for online problems. 

\subsubsection{Distributed Stochastic Variance Reduction Methods}
The existing literature on distributed variance-reduction methods is almost exclusively focused on convex and strongly convex problems. Empirically, these methods have been shown to be promising \cite[AIDE]{reddi16aide}. Single node SVRG requires gradient estimators at each iteration to be unbiased. This is a major challenge for distributed variants of SVRG. The existing approaches try to bypass this by simulating sampling extra data \cite{lee17jmlr_distsvrg}, \cite[DANE]{shamir14dane}, \cite{wang17memory}. To the best of our knowledge, \cite{cen19arxiv_distsvrg} is the only work that avoids this additional sampling.


\subsection{Contributions}
In this paper, we propose a distributed variant of the SPIDER algorithm, Parallel Restarted SPIDER (PR-SPIDER), to solve the non-convex optimization problem \eqref{eq_problem}. Note that PR-SPIDER is a non-trivial extension of both SPIDER and parallel-restarted SGD. This is because we need to optimize both communication and computation complexities. Averaging at every step, or too often, negatively impacts the communication savings. Too infrequent averaging leads to error terms building up as we see in the analysis, which has adverse impact on convergence.
\begin{itemize}
    \item For the online setting \eqref{eq_problem_online}, our proposed approach achieves the optimal overall (aggregated across nodes) IFO complexity of $O \left( \frac{\sigma}{\epsilon^{3/2}} + \frac{\sigma^2}{\epsilon} \right)$. This result improves the long-standing best known result of $O(\sigma^2 \epsilon^{-2})$, while also achieving the linear speedup achieved in the existing literature. The communication complexity achieved $O(\epsilon^{-1})$ is also the best known in the literature. To the best of our knowledge, the only other work to achieve the same communication complexity is \cite{yu19icml_batchsize}.
    \item For the finite-sum problem \eqref{eq_problem_finite}, our proposed approach achieves the overall IFO complexity of $O(\sqrt{N n} \epsilon^{-1})$. For problems where $N \times n \leq O (\epsilon^{-2})$, this is the optimal result one can achieve, even if all the $N \times n$ functions are available at a single location \cite{spider18neurips}. At the same time, we also achieve the best known communication complexity $O(\epsilon^{-1})$. 
    \item Compared to several existing approaches which require the samples across nodes to follow the same distribution, our approach is more general in the sense that the data distribution across nodes may be different (the federated learning problem \cite{konevcny16federated}).
\end{itemize}

\subsection{Paper Organization}
The rest of the paper is organized as follows: in Section \ref{sec_spider_finite}, we discuss our approach to solve the finite-sum problem \eqref{eq_problem_finite}. We propose PR-SPIDER, a distributed, parallel variant of the SPIDER algorithm \cite{spider18neurips, spiderboost18neurips}, followed by its convergence analysis. In Section \ref{sec_spider_online}, we solve the online problem \eqref{eq_problem_online}. The algorithm proposed, and the accompanying convergence analysis builds upon the finite-sum approach and the analysis from the previous section. Finally, we conclude the paper in Section \ref{sec_conclude}. All the proofs are in the Appendices at the end.

\subsection{Notations and Assumptions}
\label{subsec_not_assum}
Given a positive integer $N$, the set $\{1,\hdots, N \}$ is denoted by the shorthand $[1:N]$. $\| \cdot \|$ denotes the vector $\ell_2$ norm. Boldface letters are used to denote vectors. \pk{We use $x \wedge y$ to denote the minimum of two numbers $x,y \in \mathbb{R}$.}

Following assumptions hold for the rest of the paper.
\begin{itemize}
\item[(A1)] \textbf{Lipschitz-ness:} All the functions are mean-squared $L$-smooth.\footnote{This assumption might seem stringent. However, we can always choose $L$ as the maximum Lipschitz constant corresponding to all the functions across all nodes.}
\begin{align}
    \mathbb{E}_{\xi} \left\| \nabla f (\mathbf{x}; \xi) - \nabla f (\mathbf{y}; \xi) \right\|^2 \leq L^2 \left\| \mathbf{x} - \mathbf{y} \right\|^2, \qquad \forall \ \mathbf{x}, \mathbf{y} \in \mathbb{R}^d.
\end{align}
\item[(A2)] All the nodes begin the algorithm from the same starting point $\mathbf{x}^0$.
\end{itemize}

\section{Parallel Restarted SPIDER - Finite Sum Case}
\label{sec_spider_finite}
We consider a network of $N$ worker nodes connected to a server node. The objective is to solve \eqref{eq_problem_finite}. Note that the number of sample functions at different nodes $i \neq j$, for $i,j \in [1:N]$, can be non-uniform, i.e., $n_i \neq n_j$. However, to ease the notational burden slightly, we assume $n_j = n$, $\forall \ j \in [1:N]$.

\subsection{Proposed Algorithm}
The proposed algorithm is inspired by the recently proposed SPIDER algorithm \cite{spider18neurips, spiderboost18neurips} for single-node stochastic nonconvex optimization. Like numerous variance-reduced approaches proposed in the literature, our algorithm also proceeds in epochs.

At the beginning of each epoch, each worker node has access to the same iterate, and the full gradient $\nabla f(\cdot)$ computed at this value. These are used to compute the first iterate of the epoch $\mathbf{x}_{i,1}^{s+1}, \forall \ i \in [1:N]$. This is followed by the inner loop (step \ref{alg1_inner_loop_begin}-\ref{alg1_inner_loop_end}). At iteration $t$ in $s$-th epoch, the worker nodes first compute an estimator of the gradient, $\mathbf{v}_{i,t}^{s+1}$ (step \ref{alg1_inner_loop_dir}). This estimator is computed iteratively, using the previous estimate $\mathbf{v}_{i,t-1}^{s+1}$, the current iterate $\mathbf{x}_{i,t}^{s+1}$, and the previous iterate $\mathbf{x}_{i,t-1}^{s+1}$. The sample set $\mathcal{B}_{i,t}^{s+1}$ of size $B$ is picked uniformly randomly at each node, and independent of the other nodes. Such an estimator has been proposed in the literature for single-node stochastic nonconvex optimization \cite{spider18neurips, spiderboost18neurips, sarah17icml, nguyen17arxiv}, and has even been proved to be optimal in certain regimes.

Using the gradient estimator, the worker node computes the next iterate $\mathbf{x}_{i,t+1}^{s+1}$. This process is repeated $m-1$ times. Once every $I$ iterations of the inner loop (whenever $t$ mod $I=0$), the nodes send their local iterates and gradient estimators $\{ \mathbf{x}_{i,t}^{s+1}, \mathbf{v}_{i,t}^{s+1} \}_{i=1}^N$ to the server node. The server, in turn, computes their averages and returns the averages $\{ \bar{\mathbf{x}}_{t}^{s+1}, \bar{\mathbf{v}}_{t}^{s+1} \}$ to all the nodes (steps \ref{alg1_inner_avg_begin}-\ref{alg1_inner_avg_end}). The next iteration at each node proceeds using this iterate and direction. 

At the end of the inner loop $(t=m)$, all the worker nodes send their local iterates $\{ \mathbf{x}_{i,m}^{s+1} \}_{i=1}^N$ to the server. The server computes the model average $\bar{\mathbf{x}}_{m}^{s+1}$, and returns it to all the workers (steps \ref{alg1_x_avg_comp}-\ref{alg1_x_avg_assign}). The workers compute the full gradient of their respective functions $\{ \nabla f_i (\bar{\mathbf{x}}_{m}^{s+1}) \}_{i=1}^N$ at this point, and send it to the server. The server averages these, and returns this average (which is essentially $\nabla f (\bar{\mathbf{x}}_{m}^{s+1})$) to the worker nodes (steps \ref{alg1_v_avg_comp}-\ref{alg1_v_avg_assign}). Consequently, all the worker nodes start the next epoch at the same point, and along the same descent direction. This ``restart'' of the local computation is along the lines of Parallel-Restarted SGD \cite{stich2018arxiv, restarted_sgd19aaai}.

\pk{Before we proceed with the proof of convergence of PR-SPIDER for the finite-sum problem, following is the organization of the proof \eqref{eq_problem_finite}. 
\pk{\subsubsection*{Organization of the Proof:} 
In Section \ref{sec_prelim}, we first present some preliminary results required to prove the main result of Section \ref{sec_exp_f} given in Theorem \ref{thm_conv_finite_sum}. We first present Lemma \ref{lemma_finite_bd_var}, which bounds the variance of the average gradient estimates, $\bar{\mathbf{v}}_t^{s + 1}$}, across all agents. Proving Lemma \ref{lemma_finite_bd_var} requires an intermediate result given in Lemma \ref{lemma_finite_nw_error}, which bounds the network disagreements across gradient estimates and local iterates. In Section \ref{sec_exp_f}, we first prove the descent lemma given in Lemma \ref{lemma_finite_descent} using the results presented in Section \ref{sec_prelim}. Finally, the main result of the section is presented in Theorem \ref{thm_conv_finite_sum}, which uses the preliminary results provided in Section \ref{sec_prelim} and Lemma \ref{lemma_finite_descent}.}

\begin{algorithm}[h!]
\caption{PR-SPIDER - Finite Sum Case}
\label{alg1}
\begin{algorithmic}[1]
	\State{\textbf{Input:} Initial iterate $\mathbf{x}_{i,m}^0 = \mathbf{x}^0, \mathbf{v}_{i,m}^0 = \nabla f \left( \mathbf{x}^0 \right) \forall \ i \in [1:N]$}
	\For{$s=0$ to $S-1$}
    	\State{$\mathbf{x}^{s+1}_{i,0} = \mathbf{x}_{i,m}^s, \forall \ i \in [1:N]$ \label{alg1_x_init}}
    	\State{$\mathbf{v}^{s+1}_{i,0} = \mathbf{v}_{i,m}^s, \forall \ i \in [1:N]$ \label{alg1_v_init}}
    	\State{$\mathbf{x}_{i,1}^{s+1} = \mathbf{x}_{i,0}^{s+1} - \gamma \mathbf{v}_{i,0}^{s+1}$}
	    \For{$t=1$ to $m-1$ \label{alg1_inner_loop_begin}}
        	    \State{$\mathbf{v}_{i,t}^{s+1} = \frac{1}{B} \sum_{\xi_{i,t}^{s+1} \in \mathcal{B}_{i,t}^{s+1}} \left[ \nabla f_i \big( \mathbf{x}_{i,t}^{s+1}; \xi_{i, t}^{s+1} \big) - \nabla f_i \big( \mathbf{x}_{i,t-1}^{s+1}; \xi_{i, t}^{s+1} \big) \right] + \mathbf{v}_{i,t-1}^{s+1}, \forall \ i \in [1:N]$ \qquad ($|\mathcal{B}_{i,t}^{s+1}| = B$) \label{alg1_inner_loop_dir}}
    	    \If{$t$ mod $I = 0$ \label{alg1_inner_avg_begin}}
    	        \State{$\mathbf{x}_{i,t}^{s+1} = \bar{\mathbf{x}}_{t}^{s+1} \triangleq \frac{1}{N} \sum_{j=1}^N \mathbf{x}_{j,t}^{s+1}, \forall \ i \in [1:N]$ \label{alg1_x_avg_inloop}}
    	        \State{$\mathbf{v}_{i,t}^{s+1} = \bar{\mathbf{v}}_{t}^{s+1} \triangleq \frac{1}{N} \sum_{j=1}^N \mathbf{v}_{j,t}^{s+1}, \forall \ i \in [1:N]$ \label{alg1_v_avg_inloop}}
    	   \EndIf \label{alg1_inner_avg_end}
    	    \State{$\mathbf{x}_{i,t+1}^{s+1} = \mathbf{x}_{i,t}^{s+1} - \gamma \mathbf{v}_{i,t}^{s+1}, \forall \ i \in [1:N]$ \label{alg1_inner_loop_iter}}
	    \EndFor \label{alg1_inner_loop_end}
	    \If{$s < S-1$}
    	    \State{$\bar{\mathbf{x}}_{m}^{s+1} = \frac{1}{N} \sum_{j=1}^N \mathbf{x}_{j,m}^{s+1}$ \label{alg1_x_avg_comp}}
    	    \State{$\mathbf{x}_{i,m}^{s+1} = \bar{\mathbf{x}}_{m}^{s+1}, \forall \ i \in [1:N]$ \label{alg1_x_avg_assign}}
    	    \State{$\bar{\mathbf{v}}_{m}^{s+1} = \frac{1}{N} \sum_{j=1}^N \nabla f_j \left( \mathbf{x}_{j,m}^{s+1} \right) = \nabla f \left( \bar{\mathbf{x}}_{m}^{s+1} \right)$\label{alg1_v_avg_comp}}
    	    \State{$\mathbf{v}_{i,m}^{s+1} = \bar{\mathbf{v}}_{m}^{s+1}, \forall \ i \in [1:N]$ \label{alg1_v_avg_assign}}
	    \EndIf
	\EndFor
	\State{\textbf{Return}}
\end{algorithmic}
\end{algorithm}

\subsection{Preliminaries}
\label{sec_prelim}
We begin by first stating a few preliminary results which shall be useful in the convergence proof. First, we bound the error in the average (across nodes) gradient estimator.
\begin{lemma}{(Gradient Estimate Error)}
\label{lemma_finite_bd_var}
For $0 < t < m, 0 \leq s \leq S-1$, the sequence of iterates $\{ \mathbf{x}_{i,t}^{s+1} \}_{i,t}$ and $\{ \mathbf{v}_{i,t}^{s+1} \}_{i,t}$ generated by Algorithm \ref{alg1} satisfies 
\begin{align}
    & \mathbb{E} \left\| \bar{\mathbf{v}}_{t}^{s+1} - \mfrac{1}{N} \sum_{i=1}^N \nabla f_i \left( \mathbf{x}_{i,t}^{s+1} \right) \right\|^2 \leq \underbrace{\mathbb{E} \left\| \bar{\mathbf{v}}_{0}^{s+1} - \mfrac{1}{N} \sum_{i=1}^N \nabla f_i \left( \mathbf{x}_{i,0}^{s+1} \right) \right\|^2}_{E_0^{s+1}} + \frac{L^2}{N^2 B} \sum_{i=1}^N \sum_{\ell = 0}^{t-1} \mathbb{E} \Big\| \mathbf{x}_{i,\ell + 1}^{s+1} - \mathbf{x}_{i,\ell}^{s+1} \Big\|^2 \label{eq_lemma_finite_1}
\end{align}
\end{lemma}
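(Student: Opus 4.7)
The plan is to derive a one-step recursion for the error $E_t^{s+1}:=\mathbb{E}\bigl\|\bar{\mathbf{v}}_t^{s+1}-\tfrac1N\sum_i\nabla f_i(\mathbf{x}_{i,t}^{s+1})\bigr\|^2$ and then telescope back to $t=0$. The starting point is to average the SPIDER update on line~\ref{alg1_inner_loop_dir} across the $N$ nodes. Because the in-loop averaging in lines~\ref{alg1_x_avg_inloop}--\ref{alg1_v_avg_inloop} replaces each local vector by the node-average (leaving $\bar{\mathbf{v}}_t^{s+1}$ and the mean $\tfrac1N\sum_i\nabla f_i(\mathbf{x}_{i,t}^{s+1})$ unchanged), I can write, without having to branch on whether averaging occurs at step $t$,
\begin{align*}
\bar{\mathbf{v}}_{t}^{s+1}=\bar{\mathbf{v}}_{t-1}^{s+1}+\frac{1}{NB}\sum_{i=1}^{N}\sum_{\xi\in\mathcal{B}_{i,t}^{s+1}}\bigl[\nabla f_i(\mathbf{x}_{i,t}^{s+1};\xi)-\nabla f_i(\mathbf{x}_{i,t-1}^{s+1};\xi)\bigr].
\end{align*}

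Next I introduce the per-node noise
\[
\mathbf{Z}_{i,t}:=\frac{1}{B}\sum_{\xi\in\mathcal{B}_{i,t}^{s+1}}\bigl[\nabla f_i(\mathbf{x}_{i,t}^{s+1};\xi)-\nabla f_i(\mathbf{x}_{i,t-1}^{s+1};\xi)\bigr]-\bigl[\nabla f_i(\mathbf{x}_{i,t}^{s+1})-\nabla f_i(\mathbf{x}_{i,t-1}^{s+1})\bigr],
\]
which, by direct algebra, converts the above recursion into
\[
\bar{\mathbf{v}}_{t}^{s+1}-\tfrac1N\!\sum_i\nabla f_i(\mathbf{x}_{i,t}^{s+1}) \;=\; \bar{\mathbf{v}}_{t-1}^{s+1}-\tfrac1N\!\sum_i\nabla f_i(\mathbf{x}_{i,t-1}^{s+1}) \;+\; \tfrac1N\sum_i\mathbf{Z}_{i,t}.
\]
Squaring and taking conditional expectation with respect to the natural filtration $\mathcal{F}_{t-1}$, the cross term vanishes because $\mathbb{E}[\mathbf{Z}_{i,t}\mid\mathcal{F}_{t-1}]=0$ (unbiasedness of the stochastic gradient) and because $\mathbf{x}_{i,t}^{s+1},\mathbf{x}_{i,t-1}^{s+1}$ are $\mathcal{F}_{t-1}$-measurable. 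Independence of the batches $\mathcal{B}_{i,t}^{s+1}$ across the nodes then gives $\mathbb{E}\|\tfrac1N\sum_i\mathbf{Z}_{i,t}\|^2=\tfrac{1}{N^2}\sum_i\mathbb{E}\|\mathbf{Z}_{i,t}\|^2$.

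For the per-node variance, I use the standard bound $\mathrm{Var}(X)\le\mathbb{E}\|X\|^2$ on each sample summand together with the $L$-mean-squared smoothness (A1) to get $\mathbb{E}\|\mathbf{Z}_{i,t}\|^2\le\frac{L^2}{B}\mathbb{E}\|\mathbf{x}_{i,t}^{s+1}-\mathbf{x}_{i,t-1}^{s+1}\|^2$. Combining these ingredients produces the one-step recursion
\[
E_t^{s+1} \;\le\; E_{t-1}^{s+1} \;+\; \frac{L^2}{N^2 B}\sum_{i=1}^{N}\mathbb{E}\bigl\|\mathbf{x}_{i,t}^{s+1}-\mathbf{x}_{i,t-1}^{s+1}\bigr\|^2,
\]
which, summed from index $1$ through $t$, gives the claimed bound with the base term $E_0^{s+1}$.

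The only subtlety I expect — and the main obstacle to spell out carefully — is the bookkeeping around the intermittent averaging step (lines~\ref{alg1_x_avg_inloop}--\ref{alg1_v_avg_inloop}). One has to verify that this step neither breaks the recursion (because $\bar{\mathbf{v}}_t^{s+1}$ is invariant under replacing each $\mathbf{v}_{i,t}^{s+1}$ by the average, and likewise for the iterates appearing inside $\tfrac1N\sum_i\nabla f_i(\mathbf{x}_{i,t}^{s+1})$) nor disrupts the measurability/independence structure used to kill the cross term, since the averaging operation is deterministic given $\mathcal{F}_{t-1}$ and introduces no additional randomness. Once these bookkeeping points are secured, the rest of the argument is a transparent martingale variance computation analogous to the single-node SPIDER analysis.
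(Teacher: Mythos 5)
Your proposal is correct and follows essentially the same route as the paper's proof in Appendix \ref{app_lemma_finite_bd_var}: the same one-step recursion obtained by averaging the SPIDER update, the same vanishing of cross terms via unbiasedness of the stochastic gradients together with independence of the mini-batches across nodes and across samples, the same per-node variance bound $\frac{L^2}{B}\mathbb{E}\|\mathbf{x}_{i,t}^{s+1}-\mathbf{x}_{i,t-1}^{s+1}\|^2$ from (A1), and the same telescoping back to $E_0^{s+1}$. One minor caveat: your parenthetical claim that the in-loop averaging leaves $\frac{1}{N}\sum_{i}\nabla f_i(\mathbf{x}_{i,t}^{s+1})$ unchanged is not literally true, since replacing each $\mathbf{x}_{i,t}^{s+1}$ by $\bar{\mathbf{x}}_{t}^{s+1}$ does not commute with the nonlinear maps $\nabla f_i$ --- but this bookkeeping point is equally glossed over in the paper's own proof and does not reflect a difference in approach.
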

\begin{proof}
See Appendix \ref{app_lemma_finite_bd_var}.
\end{proof}
The error in the average gradient estimate at time $t$ is bounded in terms of the corresponding error at the beginning of the epoch, and the average cumulative difference of consecutive local iterates. In \eqref{eq_lemma_finite_1}, we explicitly define $E_0^{s+1} \triangleq \mathbb{E} \left\| \bar{\mathbf{v}}_{0}^{s+1} - \mfrac{1}{N} \sum\nolimits_{i=1}^N \nabla f_i \left( \mathbf{x}_{i,0}^{s+1} \right) \right\|^2$. Note that by Algorithm \ref{alg1} (for finite-sum problems),
\begin{align*}
    \bar{\mathbf{v}}_{0}^{s+1} &= \mfrac{1}{N} \sum\nolimits_{i=1}^N \mathbf{v}_{i,0}^{s+1} = \mfrac{1}{N} \sum\nolimits_{i=1}^N \mathbf{v}_{i,m}^{s} = \bar{\mathbf{v}}_{m}^{s} \tag*{(steps \ref{alg1_v_init}, \ref{alg1_v_avg_assign} in Algorithm \ref{alg1})} \\
    &= \mfrac{1}{N} \sum\nolimits_{i=1}^N \nabla f_j \left( \bar{\mathbf{x}}_{m}^{s} \right) \tag*{(steps \ref{alg1_x_avg_comp}-\ref{alg1_v_avg_comp} in Algorithm \ref{alg1})} \\
    &= \mfrac{1}{N} \sum\nolimits_{i=1}^N \nabla f_j \left( \bar{\mathbf{x}}_{j,0}^{s+1} \right) \tag*{(step \ref{alg1_x_init} in Algorithm \ref{alg1})}
\end{align*}
Therefore, $E_0^{s+1} = 0, \forall s$. However, we retain it (as in \cite{hong19arXiv}), as they shall be needed in the analysis of online problems.

Next, we bound the network disagreements of the local estimates relative to global averages. There are two such error terms, corresponding to: 1) the local gradient estimators, and 2) the local iterates. For this purpose, we first define
\begin{equation}
\label{eq_tau}
    \tau(\ell) = \begin{cases}
        \argmax_j \left\{ j \mid j < \ell, j \text{ mod } I = 0  \right\} & \text{ if } \ell \text{ mod } I \neq 0 \\ 
        \ell & \text{ otherwise.}
        \end{cases}
\end{equation}
Note that, $\tau(\ell)$ is the largest iteration index smaller than (or equal to) $\ell$, which is a multiple of $I$. Basically, looking back from $\ell$, $\tau(\ell)$ is the latest time index when averaging happened in the current epoch (steps \ref{alg1_x_avg_inloop}-\ref{alg1_v_avg_inloop}). 
\begin{lemma}{(Network Disagreements)}
\label{lemma_finite_nw_error}
Given $0 \leq \ell \leq m$, $\alpha > 0, \delta > 0, \theta > 0$ such that $\theta = \delta + 8 \gamma^2 L^2 ( 1 + \frac{1}{\delta} )$. For $\ell \text{ mod } I \neq 0$,
\begin{align}
    & \sum_{i=1}^N \mathbb{E} \left\| \mathbf{v}_{i,\ell}^{s+1} - \bar{\mathbf{v}}_{\ell}^{s+1}  \right\|^2 \leq 8 \gamma^2 N L^2 \big( 1 + \mfrac{1}{\delta} \big) \sum_{j=\tau(\ell)}^{\ell-1} (1+\theta)^{\ell-1-j} \mathbb{E} \left\| \bar{\mathbf{v}}_{j}^{s+1}  \right\|^2 \label{eq_lemma_finite_nw_error_1} \\
    & \sum_{i=1}^N \mathbb{E} \left\| \mathbf{x}_{i,\ell}^{s+1} - \bar{\mathbf{x}}_{\ell}^{s+1} \right\|^2 \leq \left( 1 + \mfrac{1}{\alpha} \right) \gamma^2 \sum_{i=1}^N \sum_{j=\tau(\ell)+1}^{\ell-1} (1+\alpha)^{\ell-1-j} \mathbb{E} \left\| \mathbf{v}_{i,j}^{s+1} - \bar{\mathbf{v}}_{j}^{s+1} \right\|^2 \label{eq_lemma_finite_nw_error_2}
\end{align}
If $\ell \text{ mod } I = 0,$ $\sum_{i=1}^N \mathbb{E} \| \mathbf{v}_{i,\ell}^{s+1} - \bar{\mathbf{v}}_{\ell}^{s+1}  \|^2 = \sum_{i=1}^N \mathbb{E} \| \mathbf{x}_{i,\ell}^{s+1} - \bar{\mathbf{x}}_{\ell}^{s+1} \|^2 = 0$.
\end{lemma}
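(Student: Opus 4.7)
Both disagreement bounds follow from a one-step recursion in $\ell$, unrolled back to the most recent synchronization time $\tau(\ell)$. At that time, steps \ref{alg1_x_avg_inloop} and \ref{alg1_v_avg_inloop} force $\mathbf{x}_{i,\tau(\ell)}^{s+1} = \bar{\mathbf{x}}_{\tau(\ell)}^{s+1}$ and $\mathbf{v}_{i,\tau(\ell)}^{s+1} = \bar{\mathbf{v}}_{\tau(\ell)}^{s+1}$, so the recursion starts from zero. The boundary case $\ell \bmod I = 0$ is then immediate because the averaging happens at $\ell$ itself, making both sums vanish.

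\textbf{Iterate disagreement (second inequality).} Subtracting the averaged update $\bar{\mathbf{x}}_{\ell}^{s+1} = \bar{\mathbf{x}}_{\ell-1}^{s+1} - \gamma \bar{\mathbf{v}}_{\ell-1}^{s+1}$ from the local update in step \ref{alg1_inner_loop_iter} gives $\mathbf{x}_{i,\ell}^{s+1} - \bar{\mathbf{x}}_{\ell}^{s+1} = (\mathbf{x}_{i,\ell-1}^{s+1} - \bar{\mathbf{x}}_{\ell-1}^{s+1}) - \gamma (\mathbf{v}_{i,\ell-1}^{s+1} - \bar{\mathbf{v}}_{\ell-1}^{s+1})$. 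Squaring and applying Young's inequality with parameter $\alpha$ yields
$$\|\mathbf{x}_{i,\ell}^{s+1} - \bar{\mathbf{x}}_{\ell}^{s+1}\|^2 \leq (1+\alpha)\|\mathbf{x}_{i,\ell-1}^{s+1} - \bar{\mathbf{x}}_{\ell-1}^{s+1}\|^2 + \bigl(1+\tfrac{1}{\alpha}\bigr)\gamma^2 \|\mathbf{v}_{i,\ell-1}^{s+1} - \bar{\mathbf{v}}_{\ell-1}^{s+1}\|^2.$$
Unrolling this linear recursion from $\ell$ back to $\tau(\ell)$ produces the geometric sum in \eqref{eq_lemma_finite_nw_error_2}. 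The lower limit $\tau(\ell)+1$ (rather than $\tau(\ell)$) comes for free, since $\mathbf{v}_{i,\tau(\ell)}^{s+1} - \bar{\mathbf{v}}_{\tau(\ell)}^{s+1} = 0$ by step \ref{alg1_v_avg_inloop}.

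\textbf{Gradient estimator disagreement (first inequality).} From step \ref{alg1_inner_loop_dir}, write $\mathbf{v}_{i,\ell}^{s+1} - \bar{\mathbf{v}}_{\ell}^{s+1} = (\mathbf{v}_{i,\ell-1}^{s+1} - \bar{\mathbf{v}}_{\ell-1}^{s+1}) + (\mathbf{d}_{i,\ell}^{s+1} - \bar{\mathbf{d}}_{\ell}^{s+1})$, where $\mathbf{d}_{i,\ell}^{s+1} \triangleq \frac{1}{B}\sum_{\xi \in \mathcal{B}_{i,\ell}^{s+1}}[\nabla f_i(\mathbf{x}_{i,\ell}^{s+1};\xi) - \nabla f_i(\mathbf{x}_{i,\ell-1}^{s+1};\xi)]$ and $\bar{\mathbf{d}}_{\ell}^{s+1}$ is its average across nodes. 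Young's inequality with parameter $\delta$ followed by summation over $i$ gives
$$\sum_i \mathbb{E}\|\mathbf{v}_{i,\ell}^{s+1} - \bar{\mathbf{v}}_{\ell}^{s+1}\|^2 \leq (1+\delta)\sum_i \mathbb{E}\|\mathbf{v}_{i,\ell-1}^{s+1} - \bar{\mathbf{v}}_{\ell-1}^{s+1}\|^2 + \bigl(1+\tfrac{1}{\delta}\bigr)\sum_i \mathbb{E}\|\mathbf{d}_{i,\ell}^{s+1} - \bar{\mathbf{d}}_{\ell}^{s+1}\|^2.$$
For the last sum I would use $\|a-b\|^2 \leq 2\|a\|^2 + 2\|b\|^2$ and $N\|\bar{\mathbf{d}}_\ell\|^2 \leq \sum_i \|\mathbf{d}_{i,\ell}\|^2$ to obtain $\sum_i \|\mathbf{d}_{i,\ell}^{s+1} - \bar{\mathbf{d}}_{\ell}^{s+1}\|^2 \leq 4\sum_i \|\mathbf{d}_{i,\ell}^{s+1}\|^2$, then apply Jensen and mean-squared smoothness (A1) to get $\mathbb{E}\|\mathbf{d}_{i,\ell}^{s+1}\|^2 \leq L^2\,\mathbb{E}\|\mathbf{x}_{i,\ell}^{s+1} - \mathbf{x}_{i,\ell-1}^{s+1}\|^2 = \gamma^2 L^2\,\mathbb{E}\|\mathbf{v}_{i,\ell-1}^{s+1}\|^2$, and finally split $\|\mathbf{v}_{i,\ell-1}^{s+1}\|^2 \leq 2\|\mathbf{v}_{i,\ell-1}^{s+1} - \bar{\mathbf{v}}_{\ell-1}^{s+1}\|^2 + 2\|\bar{\mathbf{v}}_{\ell-1}^{s+1}\|^2$. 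Combining, the coefficient on the past disagreement becomes $(1+\delta) + 8\gamma^2 L^2(1+1/\delta) = 1+\theta$, and the new ``source'' term is $8N\gamma^2 L^2(1+1/\delta)\,\mathbb{E}\|\bar{\mathbf{v}}_{\ell-1}^{s+1}\|^2$. Unrolling the resulting recursion back to $\tau(\ell)$, where the disagreement is zero, yields \eqref{eq_lemma_finite_nw_error_1}.

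The main obstacle is not conceptual but arithmetic: the factor-of-$2$'s from each Young-type splitting compound through the chain and must line up to give exactly $\theta = \delta + 8\gamma^2 L^2(1+1/\delta)$, since any slack here would compound geometrically under the subsequent unrolling over up to $I$ inner iterations.
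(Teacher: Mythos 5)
Your proposal is correct and follows essentially the same route as the paper: a one-step Young-inequality recursion (with parameter $\delta$ for the $\mathbf{v}$-disagreement and $\alpha$ for the $\mathbf{x}$-disagreement), with the mini-batch difference term controlled via Jensen, (A1), and the update rule $\mathbf{x}_{i,\ell}^{s+1}-\mathbf{x}_{i,\ell-1}^{s+1}=-\gamma\mathbf{v}_{i,\ell-1}^{s+1}$, then unrolled back to $\tau(\ell)$ where averaging zeroes the initial disagreement. Your constants line up exactly with the paper's (the factor $4$ from $\sum_i\|\mathbf{d}_{i,\ell}-\bar{\mathbf{d}}_\ell\|^2\le 4\sum_i\|\mathbf{d}_{i,\ell}\|^2$ times the factor $2$ from splitting $\mathbf{v}_{i,\ell-1}^{s+1}$ around its average gives the $8\gamma^2L^2(1+\frac{1}{\delta})$ in $\theta$), so there is nothing to correct.
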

\begin{proof}
See Appendix \ref{app_lemma_finite_nw_error}.
\end{proof}
Lemma \ref{lemma_finite_nw_error} quantifies the network error at any time instant $\ell$. Owing to the frequent averaging (every $I$ iterations), this error build-up is limited. One can easily see that in the absence of within-epoch averaging (steps \ref{alg1_x_avg_inloop}-\ref{alg1_v_avg_inloop}), the sum over time in \eqref{eq_lemma_finite_nw_error_1} would start at $j=0$, rather than $j=\tau(\ell)$, leading to a greater error build-up. The same reasoning holds for \eqref{eq_lemma_finite_nw_error_2}.  Note that we can further bound \eqref{eq_lemma_finite_nw_error_2} by substituting the bound on $\sum_{i=1}^N \mathbb{E} \| \mathbf{v}_{i,j}^{s+1} - \bar{\mathbf{v}}_{j}^{s+1} \|^2$ from \eqref{eq_lemma_finite_nw_error_1}.

Next, we first present a descent lemma which along with the preliminary results of this section is then used to prove the convergence of PR-SPIDER for the finite sum problem \eqref{eq_problem_finite}.

\subsection{Convergence Analysis}
\label{sec_exp_f}
Given $\bar{\mathbf{x}}_{t+1}^{s+1} = \bar{\mathbf{x}}_{t}^{s+1} - \gamma \bar{\mathbf{v}}_{t}^{s+1}$, using $L$-Lipshcitz gradient property of $f$
\begin{align}
\mathbb{E} f \left( \bar{\mathbf{x}}_{t+1}^{s+1} \right) & \leq \mathbb{E} f \left( \bar{\mathbf{x}}_{t}^{s+1} \right) - \gamma \mathbb{E} \left\langle \nabla f \left( \bar{\mathbf{x}}_{t}^{s+1} \right), \bar{\mathbf{v}}_{t}^{s+1} \right\rangle + \frac{\gamma^2 L}{2} \mathbb{E} \left\| \bar{\mathbf{v}}_{t}^{s+1} \right\|^2 \nonumber \\
&= \mathbb{E} f \left( \bar{\mathbf{x}}_{t}^{s+1} \right) - \frac{\gamma}{2} \mathbb{E} \left\| \nabla f \left( \bar{\mathbf{x}}_{t}^{s+1} \right) \right\|^2 - \frac{\gamma}{2} (1-L \gamma) \mathbb{E} \left\| \bar{\mathbf{v}}_{t}^{s+1} \right\|^2 \nonumber \\
& \qquad \qquad + \frac{\gamma}{2} \mathbb{E} \left\| \nabla f \left( \bar{\mathbf{x}}_{t}^{s+1} \right) - \bar{\mathbf{v}}_{t}^{s+1} \right\|^2 \label{eq_upbd_f_avg_Lipschitz}
\end{align}
where in \eqref{eq_upbd_f_avg_Lipschitz}, we use $\left\langle a,b \right\rangle = \mfrac{||a||^2 + ||b||^2 - ||a-b||^2}{2}$. Next, we upper bound the last term in \eqref{eq_upbd_f_avg_Lipschitz}. We assume $t > 0$ (for $t = 0, \mathbb{E} \left\| \nabla f \left( \bar{\mathbf{x}}_{0}^{s+1} \right) - \bar{\mathbf{v}}_{0}^{s+1} \right\|^2 = 0$ - steps \ref{alg1_v_init}, \ref{alg1_v_avg_comp} in Algorithm \ref{alg1}).
\begin{align}
& \mathbb{E} \left\| \nabla f \left( \bar{\mathbf{x}}_{t}^{s+1} \right) - \bar{\mathbf{v}}_{t}^{s+1} \right\|^2 = \mathbb{E} \left\| \mfrac{1}{N} \sum_{i=1}^N \left\{ \nabla f_i \left( \bar{\mathbf{x}}_{t}^{s+1} \right) - \nabla f_i \left( \mathbf{x}_{i,t}^{s+1} \right) + \nabla f_i \left( \mathbf{x}_{i,t}^{s+1} \right) - \mathbf{v}_{i,t}^{s+1} \right\} \right\|^2 \nonumber \\
& \qquad \overset{(b)}{\leq}  \left[ 2 \mathbb{E} \left\| \mfrac{1}{N} \sum_{i=1}^N \left\{ \nabla f_i \left( \bar{\mathbf{x}}_{t}^{s+1} \right) - \nabla f_i \left( \mathbf{x}_{i,t}^{s+1} \right) \right\} \right\|^2 + 2 \mathbb{E} \left\| \mfrac{1}{N} \sum_{i=1}^N \left\{ \mathbf{v}_{i,t}^{s+1} - \nabla f_i \left( \mathbf{x}_{i,t}^{s+1} \right) \right\} \right\|^2 \right] \label{eq_upbd_gradf_avgv_1} \\
& \qquad \overset{(c)}{\leq} \frac{2L^2}{N} \sum_{i=1}^N \mathbb{E} \left\| \mathbf{x}_{i,t}^{s+1} - \bar{\mathbf{x}}_{t}^{s+1} \right\|^2 + 2 \mathbb{E} \left\| \bar{\mathbf{v}}_{t}^{s+1} - \mfrac{1}{N} \sum\nolimits_{i=1}^N \nabla f_i \left( \mathbf{x}_{i,t}^{s+1} \right) \right\|^2 \label{eq_upbd_gradf_avgv_2}
\end{align}
where \eqref{eq_upbd_gradf_avgv_1}, \eqref{eq_upbd_gradf_avgv_2} follow from $\mathbb{E} \left\| \sum_{i=1}^n X_i \right\|^2 \leq n \sum_{i=1}^n \mathbb{E} \left\| X_i \right\|^2$ and the mean-squared $L$-smoothness assumption (A1). More precisely,
\begin{align*}
    & \mathbb{E} \left\| \frac{1}{N} \sum_{i=1}^N \left\{ \nabla f_i \left( \bar{\mathbf{x}}_{t}^{s+1} \right) - \nabla f_i \left( \mathbf{x}_{i,t}^{s+1} \right) \right\} \right\|^2 \\
    \quad & \leq \frac{1}{N} \sum_{i=1}^N \mathbb{E} \left\| \nabla f_i \left( \bar{\mathbf{x}}_{t}^{s+1} \right) - \nabla f_i \left( \mathbf{x}_{i,t}^{s+1} \right) \right\|^2 \qquad \tag*{(using Jensen's inequality)} \\
    \quad & = \frac{1}{N} \sum_{i=1}^N \mathbb{E} \left\| \nabla \mathbb{E}_{\xi} f_i \left( \bar{\mathbf{x}}_{t}^{s+1}; \xi \right) - \nabla \mathbb{E}_{\xi} f_i \left( \mathbf{x}_{i,t}^{s+1}; \xi \right) \right\|^2 \\
    \quad & \leq \frac{1}{N} \sum_{i=1}^N \mathbb{E} \mathbb{E}_{\xi} \left\| \nabla  f_i \left( \bar{\mathbf{x}}_{t}^{s+1}; \xi \right) - \nabla f_i \left( \mathbf{x}_{i,t}^{s+1}; \xi \right) \right\|^2 \qquad \tag*{(using Jensen's inequality)} \\
    \quad & \leq \frac{1}{N} \sum_{i=1}^N \mathbb{E} \left\| \bar{\mathbf{x}}_{t}^{s+1} - \mathbf{x}_{i,t}^{s+1} \right\|^2 \qquad \tag*{using (A1)}.
\end{align*}
The first term in \eqref{eq_upbd_gradf_avgv_2} is the network error of local iterates, which is upper bounded in Lemma \ref{lemma_finite_nw_error}. Using Lemma \ref{lemma_finite_bd_var} to bound the second term of \eqref{eq_upbd_gradf_avgv_2}, we get

\begin{align}
& \mathbb{E} \big\| \nabla f \left( \bar{\mathbf{x}}_{t}^{s+1} \right) - \bar{\mathbf{v}}_{t}^{s+1} \big\|^2 \leq \frac{2 L^2}{N} \sum_{i=1}^N \mathbb{E} \big\| \mathbf{x}_{i,t}^{s+1} - \bar{\mathbf{x}}_{t}^{s+1} \big\|^2 + 2 E_0^{s+1} + \frac{2 L^2}{N^2 B} \sum_{i=1}^N \sum_{\ell = 0}^{t-1} \mathbb{E} \big\| \mathbf{x}_{i,\ell + 1}^{s+1} - \mathbf{x}_{i,\ell}^{s+1} \big\|^2 \nonumber \\
& \quad \leq \frac{2 L^2}{N} \sum_{i=1}^N \mathbb{E} \big\| \mathbf{x}_{i,t}^{s+1} - \bar{\mathbf{x}}_{t}^{s+1} \big\|^2 + \frac{2 \gamma^2 L^2}{N^2 B} \sum_{i=1}^N \sum_{\ell=0}^{t-1} \mathbb{E} \big\| \mathbf{v}_{i,\ell}^{s+1} - \bar{\mathbf{v}}_{\ell}^{s+1} + \bar{\mathbf{v}}_{\ell}^{s+1} \big\|^2 + 2 E_0^{s+1} \label{eq_upbd_gradf_avgv_3} \\
& \quad \leq \frac{2 L^2}{N} \sum_{i=1}^N \mathbb{E} \big\| \mathbf{x}_{i,t}^{s+1} - \bar{\mathbf{x}}_{t}^{s+1} \big\|^2 + \frac{4 \gamma^2 L^2}{N^2 B} \sum_{i=1}^N \sum_{\ell=0}^{t-1} \Big[ \mathbb{E} \big\| \mathbf{v}_{i,\ell}^{s+1} - \bar{\mathbf{v}}_{\ell}^{s+1}  \big\|^2 + \mathbb{E} \big\| \bar{\mathbf{v}}_{\ell}^{s+1}  \big\|^2 \Big] + 2 E_0^{s+1} \label{eq_upbd_gradf_avgv_4}
\end{align}
where \eqref{eq_upbd_gradf_avgv_3} follows since $\mathbf{x}_{i,\ell + 1}^{s+1} = \mathbf{x}_{i,\ell}^{s+1} - \gamma \mathbf{v}_{i,\ell}^{s+1}$. The second term in \eqref{eq_upbd_gradf_avgv_4} implies that during the epoch, as $t$ increases, in the absence of any averaging/communication within an epoch, the error $\sum_{\ell=0}^{t-1} \mathbb{E} \| \mathbf{v}_{i,\ell}^{s+1} - \bar{\mathbf{v}}_{\ell}^{s+1} \|^2$ keeps building up. To check this rapid growth, we introduce within-epoch averaging every $I$ iterations (steps \ref{alg1_x_avg_inloop}-\ref{alg1_v_avg_inloop}). Using the upper bounds on the two network error terms in \eqref{eq_upbd_gradf_avgv_4}, from Lemma \ref{lemma_finite_nw_error}, we get
\begin{align}
	& \mathbb{E} \big\| \nabla f \left( \bar{\mathbf{x}}_{t}^{s+1} \right) - \bar{\mathbf{v}}_{t}^{s+1} \big\|^2 \nonumber \\
	& \quad \leq \frac{2 L^2}{N} \left( 1 + \mfrac{1}{\alpha} \right) \gamma^2 \sum_{\ell=\tau(t)+1}^{t-1} (1+\alpha)^{t-1-\ell} 8 \gamma^2 N L^2 \big( 1 + \mfrac{1}{\delta} \big) \sum_{j=\tau(\ell)}^{\ell-1} (1+\theta)^{\ell-1-j} \mathbb{E} \Big\| \bar{\mathbf{v}}_{j}^{s+1}  \Big\|^2 \nonumber \\
	& \quad + \frac{4 \gamma^2 L^2}{N^2 B} \sum_{\ell=0}^{t-1} 8 \gamma^2 N L^2 \big( 1 + \mfrac{1}{\delta} \big) \sum_{j=\tau(\ell)}^{\ell-1} (1+\theta)^{\ell-1-j} \mathbb{E} \Big\| \bar{\mathbf{v}}_{j}^{s+1}  \Big\|^2 + \frac{4 \gamma^2 L^2}{N^2 B} N \sum_{\ell = 0}^{t-1} \mathbb{E} \Big\| \bar{\mathbf{v}}_{\ell}^{s+1}  \Big\|^2 + 2 E_0^{s+1}. \label{eq_upbd_gradf_avgv_5}
\end{align}
We substitute this upper bound in \eqref{eq_upbd_f_avg_Lipschitz} to derive the following descent lemma on function values.

\begin{lemma}{(Descent Lemma)}
\label{lemma_finite_descent}
In each epoch $s \in [1,S]$,
\begin{align}
    \mathbb{E} f \left( \bar{\mathbf{x}}_{m}^{s+1} \right) & \leq \mathbb{E} f \left( \bar{\mathbf{x}}_{0}^{s+1} \right) - \frac{\gamma}{2} \sum_{t=0}^{m-1} \mathbb{E} \big\| \nabla f \left( \bar{\mathbf{x}}_{t}^{s+1} \right) \big\|^2 - \frac{\gamma}{2} (1-L \gamma) \sum_{t=0}^{m-1} \mathbb{E} \big\| \bar{\mathbf{v}}_{t}^{s+1} \big\|^2 \nonumber \\
    & \qquad \qquad + \sum_{t=0}^{m-1} \mathbb{E} \Big\| \bar{\mathbf{v}}_{t}^{s+1} \Big\|^2 \left[ \frac{64 \gamma^5 L^4 m}{N B \delta^2} + \frac{2 \gamma^{3} L^2 m}{N B} + \frac{256 \gamma^{5} L^4}{\delta^4} \right] . \label{eq_lemma_finite_descent}
\end{align}
\end{lemma}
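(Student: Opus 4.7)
The plan is to sum the per-step descent inequality \eqref{eq_upbd_f_avg_Lipschitz} over $t = 0, 1, \dots, m-1$, telescope the function-value terms to obtain $\mathbb{E} f(\bar{\mathbf{x}}_{m}^{s+1}) - \mathbb{E} f(\bar{\mathbf{x}}_{0}^{s+1})$ on the left, and then substitute the upper bound on $\mathbb{E}\|\nabla f(\bar{\mathbf{x}}_{t}^{s+1}) - \bar{\mathbf{v}}_{t}^{s+1}\|^2$ given in \eqref{eq_upbd_gradf_avgv_5}. In the finite-sum setting we have $E_0^{s+1}=0$ (as shown after Lemma \ref{lemma_finite_bd_var}), so only the three sums involving $\mathbb{E}\|\bar{\mathbf{v}}_{j}^{s+1}\|^2$ need to be controlled.

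After this substitution, the right-hand side becomes a double (and even triple) sum of terms of the form $(1+\alpha)^{t-1-\ell}(1+\theta)^{\ell-1-j}\mathbb{E}\|\bar{\mathbf{v}}_{j}^{s+1}\|^2$, indexed by $j \le \ell \le t-1$. The next step is to interchange the order of summation and collect everything as a coefficient of $\mathbb{E}\|\bar{\mathbf{v}}_{j}^{s+1}\|^2$ for each fixed $j \in \{0,\dots,m-1\}$. Because of the restrictions $\ell \ge \tau(t)+1$ and $j \ge \tau(\ell)$ and the fact that $\tau(\cdot)$ looks back at most $I$ steps (the within-epoch averaging period), each such geometric sum has at most $I$ non-zero terms. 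The two ``small'' terms on the last line of \eqref{eq_upbd_gradf_avgv_5} --- the one with factor $\tfrac{4\gamma^2L^2}{N^2B}$ scaling a double sum, and the bare $\tfrac{4\gamma^2L^2}{NB}\sum_{\ell=0}^{t-1}\mathbb{E}\|\bar{\mathbf{v}}_{\ell}^{s+1}\|^2$ term --- can similarly be flipped to give, after multiplication by $\gamma/2$, coefficients proportional to $\tfrac{\gamma^3 L^2 m}{NB}$ and $\tfrac{\gamma^5 L^4 m}{NB\delta^2}$ respectively, which account for the first two bracketed constants in the lemma.

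The bulk of the work is the ``nested'' triple-sum term arising from the network error of the iterates in \eqref{eq_upbd_gradf_avgv_5} (the one with the coefficient $\tfrac{2L^2}{N}(1+\tfrac{1}{\alpha})\gamma^2 \cdot 8\gamma^2 N L^2(1+\tfrac{1}{\delta})$). Here I would bound the geometric factors $(1+\alpha)^{t-1-\ell}$ and $(1+\theta)^{\ell-1-j}$ by their values at $\ell - j \le I-1$ and $t - \ell \le I-1$, and then pick the free parameters $\alpha$ and $\delta$ (with $\theta$ determined by the relation $\theta = \delta + 8\gamma^2 L^2(1+1/\delta)$) so that $(1+\alpha)^{I}$ and $(1+\theta)^{I}$ are absolute constants --- e.g.\ choosing $\alpha$ and $\theta$ of order $1/I$ so that both exponentials are $O(1)$, while arranging $1+1/\alpha$ and $1+1/\delta$ to produce the factor $1/\delta^2$ per layer (so $1/\delta^4$ overall). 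Collecting these bounds and absorbing the constants yields the coefficient $\tfrac{256\gamma^5 L^4}{\delta^4}$ in the statement.

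\paragraph{Main obstacle.} The main obstacle is the bookkeeping around the $\tau(\cdot)$ truncation: the nested sums do not simply factorise, and a naive bound (replacing $\tau(\ell)$ by $0$) would destroy the gain from periodic averaging and give a coefficient growing with $m$ rather than with $I$. The key observation making the argument work is that swapping summation order, combined with $|\ell - \tau(\ell)| \le I$, converts each geometric tail into a bounded geometric series of length at most $I$, so the parameter choice for $\alpha, \delta$ (and hence $\theta$) kills the $I$-dependence inside the geometric factors and leaves only the linear-in-$m$ factor coming from the outermost summation $\sum_{t=0}^{m-1}$.
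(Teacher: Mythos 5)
Your proposal follows essentially the same route as the paper's proof: telescope the per-step descent inequality \eqref{eq_upbd_f_avg_Lipschitz}, substitute \eqref{eq_upbd_gradf_avgv_5} with $E_0^{s+1}=0$, and control the three resulting sums by exploiting the block structure of length $I$ induced by $\tau(\cdot)$ together with the parameter choices $I\approx 1/\theta$ and $\alpha=\delta/2$ --- which is exactly what the paper's Lemmas \ref{lem_finite_descent1}--\ref{lem_finite_descent3} do. The only immaterial differences are that you bound the geometric factors pointwise by $e$ rather than summing the geometric series exactly (which changes only the absolute constants), and your ``respectively'' attaches the coefficients $\frac{2\gamma^{3}L^{2}m}{NB}$ and $\frac{64\gamma^{5}L^{4}m}{NB\delta^{2}}$ to the wrong two terms.
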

\begin{proof}
See Appendix \ref{app_lemma_finite_descent}.
\end{proof}
Lemma \ref{lemma_finite_descent} quantifies the decay in function value across one epoch. Clearly, the extent of decay depends on the precise values of algorithm parameters $\gamma, \delta, B, m$.

Rearranging the terms in \eqref{eq_lemma_finite_descent}, and summing over epoch index $s$, we get
\begin{align}
& \frac{\gamma}{2} \sum_{t=0}^{m-1} \mathbb{E} \big\| \nabla f \left( \bar{\mathbf{x}}_{t}^{s+1} \right) \big\|^2 + \frac{\gamma}{2} \left( 1 - L \gamma - \left[ \frac{128 \gamma^{4} L^4 m}{N B \delta^2} + \frac{4 \gamma^{2} L^2 m}{N B} + \frac{512 \gamma^{4} L^4}{\delta^4} \right] \right) \sum_{t=0}^{m-1} \mathbb{E} \big\| \bar{\mathbf{v}}_{t}^{s+1} \big\|^2 \nonumber \\
& \quad \leq \left( \mathbb{E} f \left( \bar{\mathbf{x}}_{0}^{s+1} \right) - \mathbb{E} f \left( \bar{\mathbf{x}}_{m}^{s+1} \right) \right) \label{eq_favg_in_epoch2} \\
\Rightarrow & \frac{1}{T} \sum_{s=0}^{S-1} \sum_{t=0}^{m-1} \mathbb{E} \big\| \nabla f \left( \bar{\mathbf{x}}_{t}^{s+1} \right) \big\|^2 + \left( 1 - L \gamma - \left[ \frac{128 \gamma^{4} L^4 m}{N B \delta^2} + \frac{4 \gamma^{2} L^2 m}{N B} + \frac{512 \gamma^{4} L^4}{\delta^4} \right] \right) \frac{1}{T} \sum_{s=0}^{S-1}  \sum_{t=0}^{m-1} \mathbb{E} \big\| \bar{\mathbf{v}}_{t}^{s+1} \big\|^2 \nonumber \\
& \quad \leq \frac{2}{T \gamma} \sum_{s=0}^{S-1} \left( \mathbb{E} f \left( \bar{\mathbf{x}}_{0}^{s+1} \right) - \mathbb{E} f \left( \bar{\mathbf{x}}_{m}^{s+1} \right) \right) = \frac{2}{T \gamma} \left( \mathbb{E} f \left( \bar{\mathbf{x}}^{0} \right) - \mathbb{E} f \left( \bar{\mathbf{x}}_{m}^{S+1} \right) \right)  \nonumber \\
& \quad \leq \frac{2 \left( f(\mathbf{x}^0) - f_{\ast} \right)}{T \gamma}. \label{eq_favg_over_all_epcohs}
\end{align}
where, in \eqref{eq_favg_in_epoch2}, we have collected all the terms in \eqref{eq_lemma_finite_descent} with $\sum_{t=0}^{m-1} \mathbb{E} \big\| \bar{\mathbf{v}}_{t}^{s+1} \big\|^2$ on the left hand side. \eqref{eq_favg_in_epoch2} is then summed across all epochs $s=0,\hdots,S-1$, divided by $T$ and divided by $\frac{\gamma}{2}$ to get \eqref{eq_favg_over_all_epcohs}. $f_{\ast} = \min_x f(x)$. Note that in \eqref{eq_favg_over_all_epcohs}, for small enough, but constant step size $\gamma$, we can ensure that
\begin{align}
    & 1 - L \gamma - \left[ \frac{128 \gamma^{4} L^4 m}{N B \delta^2} + \frac{4 \gamma^{2} L^2 m}{N B} + \frac{512 \gamma^{4} L^4}{\delta^4} \right] \geq \frac{1}{2} \label{eq_coeff_avg_v}.
\end{align}
See Appendix \ref{app_gamma_choice} for a choice of $\gamma$ which satisfies \eqref{eq_coeff_avg_v}.


\pk{Next, using \eqref{eq_favg_over_all_epcohs} and \eqref{eq_coeff_avg_v} above, we prove the convergence of PR-SPIDER for the finite sample case \eqref{eq_problem_finite}.}
\begin{theorem}{(Convergence)}
\label{thm_conv_finite_sum}
\label{thm}
For the finite-sum problem under Assumptions (A1)-(A2), for small enough step size $0 < \gamma < \frac{1}{8IL}$, 
\begin{align}
    \min_{s,t} \left[ \mathbb{E} \left\| \nabla f \left( \bar{\mathbf{x}}_{t}^{s+1} \right) \right\|^2 + \frac{1}{N} \sum_{i=1}^N \mathbb{E} \left\| \mathbf{x}_{i,t}^{s+1} - \bar{\mathbf{x}}_{t}^{s+1} \right\|^2 \right] \leq \frac{2 \left( f(\mathbf{x}^0) - f_{\ast} \right)}{T \gamma}. \label{eq_final_bd_3}
\end{align}
\end{theorem}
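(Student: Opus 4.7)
The plan is to combine the aggregated descent inequality \eqref{eq_favg_over_all_epcohs} with Lemma \ref{lemma_finite_nw_error} so that the consensus-error term can be added to the left-hand side at no extra cost. Under \eqref{eq_coeff_avg_v}, which is already enforced by the step-size choice in Appendix \ref{app_gamma_choice}, inequality \eqref{eq_favg_over_all_epcohs} gives
\begin{align*}
\frac{1}{T}\sum_{s=0}^{S-1}\sum_{t=0}^{m-1}\mathbb{E}\bigl\|\nabla f(\bar{\mathbf{x}}_t^{s+1})\bigr\|^2 \;+\; \frac{1}{2T}\sum_{s=0}^{S-1}\sum_{t=0}^{m-1}\mathbb{E}\bigl\|\bar{\mathbf{v}}_t^{s+1}\bigr\|^2 \;\leq\; \frac{2\bigl(f(\mathbf{x}^0)-f_\ast\bigr)}{T\gamma}.
\end{align*}
Hence it suffices to show that $\tfrac{1}{NT}\sum_{s,t,i}\mathbb{E}\|\mathbf{x}_{i,t}^{s+1}-\bar{\mathbf{x}}_t^{s+1}\|^2$ is bounded by $\tfrac{1}{2T}\sum_{s,t}\mathbb{E}\|\bar{\mathbf{v}}_t^{s+1}\|^2$, so the consensus term slides onto the left-hand side and replaces the $\|\bar{\mathbf{v}}\|^2$ contribution.

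First, I will chain \eqref{eq_lemma_finite_nw_error_1} into \eqref{eq_lemma_finite_nw_error_2} to obtain a bound on the iterate disagreement purely in terms of the global directions $\|\bar{\mathbf{v}}_k^{s+1}\|^2$, namely
\begin{align*}
\sum_{i=1}^N \mathbb{E}\bigl\|\mathbf{x}_{i,\ell}^{s+1}-\bar{\mathbf{x}}_\ell^{s+1}\bigr\|^2 \leq 8\gamma^4 N L^2\Bigl(1+\tfrac{1}{\alpha}\Bigr)\Bigl(1+\tfrac{1}{\delta}\Bigr)\sum_{j=\tau(\ell)+1}^{\ell-1}(1+\alpha)^{\ell-1-j}\sum_{k=\tau(j)}^{j-1}(1+\theta)^{j-1-k}\mathbb{E}\bigl\|\bar{\mathbf{v}}_k^{s+1}\bigr\|^2.
\end{align*}
I then choose $\alpha=\delta=1/(I-1)$, which (together with $\gamma<1/(8IL)$, which makes $8\gamma^2L^2(1+1/\delta)\leq 1/(8I)$, so $\theta\leq 2/I$) ensures $(1+\alpha)^{I-1}\leq e$ and $(1+\theta)^{I-1}\leq e^{2}$, while $(1+1/\alpha)(1+1/\delta)=I^{2}$. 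Since $\ell-\tau(\ell)<I$, the two nested sums each have at most $I$ terms, and after an interchange of summation every index $k$ appears in at most $I^{2}/2$ triples $(\ell,j,k)$ within its averaging block. Collecting these observations produces the epoch-local bound
\begin{align*}
\sum_{\ell=0}^{m-1}\frac{1}{N}\sum_{i=1}^N\mathbb{E}\bigl\|\mathbf{x}_{i,\ell}^{s+1}-\bar{\mathbf{x}}_\ell^{s+1}\bigr\|^2 \;\leq\; C\,\gamma^{4}L^{2}I^{4}\sum_{t=0}^{m-1}\mathbb{E}\bigl\|\bar{\mathbf{v}}_t^{s+1}\bigr\|^2,
\end{align*}
for an absolute constant $C=O(1)$ (tracking through the argument, $C\leq 4e^{3}$).

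Second, the step-size restriction $\gamma<1/(8IL)$ forces $(\gamma LI)^{4}<1/4096$, and together with the other conditions already enforced by \eqref{eq_coeff_avg_v} this makes $C\gamma^{4}L^{2}I^{4}\leq 1/2$ (slightly tightening the absolute constant $8$ if required). Summing the epoch-local bound over $s$ and dividing by $T$ then gives
\begin{align*}
\frac{1}{T}\sum_{s=0}^{S-1}\sum_{t=0}^{m-1}\frac{1}{N}\sum_{i=1}^N\mathbb{E}\bigl\|\mathbf{x}_{i,t}^{s+1}-\bar{\mathbf{x}}_t^{s+1}\bigr\|^2 \leq \frac{1}{2T}\sum_{s=0}^{S-1}\sum_{t=0}^{m-1}\mathbb{E}\bigl\|\bar{\mathbf{v}}_t^{s+1}\bigr\|^2,
\end{align*}
which, when added to the displayed descent inequality above, yields
\begin{align*}
\frac{1}{T}\sum_{s=0}^{S-1}\sum_{t=0}^{m-1}\Bigl[\mathbb{E}\bigl\|\nabla f(\bar{\mathbf{x}}_t^{s+1})\bigr\|^2 + \frac{1}{N}\sum_{i=1}^N\mathbb{E}\bigl\|\mathbf{x}_{i,t}^{s+1}-\bar{\mathbf{x}}_t^{s+1}\bigr\|^2\Bigr] \leq \frac{2\bigl(f(\mathbf{x}^0)-f_\ast\bigr)}{T\gamma}.
\end{align*}
The theorem then follows from the trivial fact that the minimum over $(s,t)$ is upper bounded by the average.

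The main obstacle will be the combinatorial bookkeeping in the first step. The double nesting induced by chaining \eqref{eq_lemma_finite_nw_error_1} and \eqref{eq_lemma_finite_nw_error_2} produces a blow-up that a naive count would put at $I^{6}$ or larger; making it $I^{4}$ requires carefully swapping summation orders on the index $k$, and simultaneously choosing $\alpha,\delta$ small enough (order $1/I$) that the geometric factors $(1+\alpha)^{I-1}$ and $(1+\theta)^{I-1}$ stay absolute constants while not blowing up $(1+1/\alpha)(1+1/\delta)$ beyond $I^{2}$. This tradeoff is exactly what forces $\gamma$ to scale as $1/(IL)$ rather than the $1/L$ familiar from single-node analyses.
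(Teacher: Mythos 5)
Your proposal is correct and follows essentially the same route as the paper's own proof: bound the minimum by the average, control the consensus term by chaining the two network-disagreement bounds of Lemma \ref{lemma_finite_nw_error} into a multiple of $\sum_{s,t}\mathbb{E}\|\bar{\mathbf{v}}_t^{s+1}\|^2$ that the step-size condition makes at most $\tfrac{1}{2}$, and then absorb that into the slack of \eqref{eq_favg_over_all_epcohs} via \eqref{eq_coeff_avg_v}. The only cosmetic difference is that you re-derive the triple-sum bookkeeping from scratch with $\alpha=\delta=1/(I-1)$, whereas the paper reuses the bound already established in Lemma \ref{lem_finite_descent3} (with $\alpha=\delta/2$ and $\delta \asymp 1/I$), arriving at the same $O(\gamma^4 L^2 I^4)$ coefficient.
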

\begin{proof}
See Appendix \ref{app_thm_conv_finite_sum}.
\end{proof}
The above result now can be directly used to compute the bounds on the sample complexity (see Definition \ref{Def: Comp}) and the communication complexity (see Definition \ref{Def: Comm}) of the proposed algorithm PR-SPIDER for the finite sum problem \eqref{eq_problem_finite}.
\subsection{Sample Complexity}
\label{subsec_samp_comp}
Number of iterations $T$ satisfies
\begin{align}
    & \frac{2 \left( f(\mathbf{x}^0) - f_{\ast} \right)}{T \gamma} = \epsilon \quad \Rightarrow \quad T = \frac{2 \left( f(\mathbf{x}^0) - f_{\ast} \right)}{\gamma \epsilon} = \frac{C I}{\epsilon},
\end{align}
for constants $C, I$. Then for $m = I \sqrt{N n}, B = \frac{1}{I} \sqrt{\frac{n}{N}}$
\begin{align*}
    & N \times \left( \left \lceil \frac{T}{m} \right \rceil \cdot n + T \cdot B \right) \leq N \times \left( \left( \frac{C I}{m \epsilon} + 1 \right) \cdot n + \frac{C I}{\epsilon} \cdot B \right) \\
    & \leq N \times \left( \frac{C I}{\epsilon} \cdot \left( \mfrac{n}{m} + B \right) + n \right) \\
    &= O \left(  \frac{\sqrt{N n}}{\epsilon} + N n \right) \\
    &= O \left(  \frac{\sqrt{N n}}{\epsilon} \right) \tag*{for $N \times n \leq O(\epsilon^{-2})$.}
\end{align*}
Note that this is the optimal sample complexity achieved for $N \times n \leq O(\epsilon^{-2})$ in the single node case \cite{spider18neurips}, \cite{spiderboost18neurips}. Each of the $N$ nodes in our case need to compute $O \left( \frac{1}{\epsilon}\sqrt{\frac{n}{N}} \right)$ sample stochastic gradients.

\subsection{Communication Complexity}
Since communication happens once every $I$ iterations, the communication complexity (the number of communication rounds) is
\begin{align*}
    \left \lceil \frac{T}{I} \right \rceil \leq 1 + \frac{C}{\epsilon} = O \left( \frac{1}{\epsilon} \right).
\end{align*}

\section{Parallel Restarted SPIDER - Online Case}
\label{sec_spider_online}
We consider a network of $N$ worker nodes connected to a server node. The objective is to solve \eqref{eq_problem_online}. Note that the distribution of samples at different nodes can potentially be different, i.e., $\mathcal{D}_i \neq \mathcal{D}_j$, for $i \neq j$.

\subsection{Proposed Algorithm}
The pseudo-code of the approach is given in Algorithm \ref{alg2}. In the following, we only highlight the steps which are different from Algorithm \ref{alg1}. The proposed algorithm is pretty much the same as Algorithm \ref{alg1}, except a few changes to account for the fact that for problem \eqref{eq_problem_online}, exact gradients can never be computed. Hence full gradient computations are replaced by batch stochastic gradient computation, where the batches are of size $n_b$. Again, batch sizes across nodes can be non-uniform. However, we avoid doing so for the sake of simpler notations.

At the end of the inner loop $(t=m)$, first the local iterates are averaged and returned to the workers (steps \ref{alg2_x_avg_comp}-\ref{alg2_x_avg_assign}). Next, the workers compute batch stochastic gradients of their respective functions $\{ \frac{1}{n_b} \sum_{\xi_i} \nabla f_i \left( \cdot; \xi_i \right) \}_{i=1}^N$ at the common point $\bar{\mathbf{x}}_{m}^{s+1}$, and send these to the server. The server averages these, and returns this average $\bar{\mathbf{v}}_{m}^{s+1}$ to the worker nodes (steps \ref{alg2_v_avg_comp}-\ref{alg2_v_avg_assign}). As in Algorithm \ref{alg1}, all the worker nodes start the next epoch at the same point, and along the same descent direction. However, unlike Algorithm \ref{alg1}, this direction is not $\nabla f (\bar{\mathbf{x}}_{m}^{s+1})$.

\begin{algorithm}[h!]
\caption{PR-SPIDER - Online Case}
\label{alg2}
\begin{algorithmic}[1]
	\State{\textbf{Input:} Initial iterate $\mathbf{x}_{i,m}^0 = \mathbf{x}^0, \mathbf{v}_{i,m}^0 = \frac{1}{N} \sum_{j=1}^N \frac{1}{n_b} \sum_{\xi_j} \nabla f_j \left( \mathbf{x}^0; \xi_j \right), \forall \ i \in [1:N]$}
	\For{$s=0$ to $S-1$}
    	\State{$\mathbf{x}^{s+1}_{i,0} = \mathbf{x}_{i,m}^s, \forall \ i \in [1:N]$ \label{alg2_x_init}}
    	\State{$\mathbf{v}^{s+1}_{i,0} = \mathbf{v}_{i,m}^s, \forall \ i \in [1:N]$ \label{alg2_v_init}}
    	\State{$\mathbf{x}_{i,1}^{s+1} = \mathbf{x}_{i,0}^{s+1} - \gamma \mathbf{v}_{i,0}^{s+1}$}
	    \For{$t=1$ to $m-1$ \label{alg2_inner_loop_begin}}
        	    \State{$\mathbf{v}_{i,t}^{s+1} = \frac{1}{B} \sum_{\xi_{i,t}^{s+1} \in \mathcal{B}_{i,t}^{s+1}} \left[ \nabla f_i \big( \mathbf{x}_{i,t}^{s+1}; \xi_{i, t}^{s+1} \big) - \nabla f_i \big( \mathbf{x}_{i,t-1}^{s+1}; \xi_{i, t}^{s+1} \big) \right] + \mathbf{v}_{i,t-1}^{s+1}, \forall \ i \in [1:N]$ \qquad ($|\mathcal{B}_{i,t}^{s+1}| = B$) \label{alg2_inner_loop_dir}}
    	    \If{$t$ mod $I = 0$ \label{alg2_inner_avg_begin}}
    	        \State{$\mathbf{x}_{i,t}^{s+1} = \bar{\mathbf{x}}_{t}^{s+1} \triangleq \frac{1}{N} \sum_{j=1}^N \mathbf{x}_{j,t}^{s+1}, \forall \ i \in [1:N]$ \label{alg2_x_avg_inloop}}
    	        \State{$\mathbf{v}_{i,t}^{s+1} = \bar{\mathbf{v}}_{t}^{s+1} \triangleq \frac{1}{N} \sum_{j=1}^N \mathbf{v}_{j,t}^{s+1}, \forall \ i \in [1:N]$ \label{alg2_v_avg_inloop}}
    	   \EndIf \label{alg2_inner_avg_end}
    	    \State{$\mathbf{x}_{i,t+1}^{s+1} = \mathbf{x}_{i,t}^{s+1} - \gamma \mathbf{v}_{i,t}^{s+1}, \forall \ i \in [1:N]$ \label{alg2_inner_loop_iter}}
	    \EndFor \label{alg2_inner_loop_end}
	    \If{$s < S-1$}
    	    \State{$\bar{\mathbf{x}}_{m}^{s+1} = \frac{1}{N} \sum_{j=1}^N \mathbf{x}_{j,m}^{s+1}$ \label{alg2_x_avg_comp}}
    	    \State{$\mathbf{x}_{i,m}^{s+1} = \bar{\mathbf{x}}_{m}^{s+1}, \forall \ i \in [1:N]$ \label{alg2_x_avg_assign}}
    	    \State{$\bar{\mathbf{v}}_{m}^{s+1} = \frac{1}{N} \sum_{i=1}^N \frac{1}{n_b} \sum_{\xi_i} \nabla f_i \left( \mathbf{x}_{i,m}^{s+1}; \xi_i \right)$\label{alg2_v_avg_comp}}
    	    \State{$\mathbf{v}_{i,m}^{s+1} = \bar{\mathbf{v}}_{m}^{s+1}, \forall \ i \in [1:N]$ \label{alg2_v_avg_assign}}
    	 \EndIf
	\EndFor
	\State{\textbf{Return}}
\end{algorithmic}
\end{algorithm}

\subsection{Additional Assumptions}
\label{subsec_assum}
\begin{itemize}
\item[(A3)] \textbf{Bounded Variance:} There exists constant $\sigma$ such that
	\begin{align}
		\mathbb{E}_{\xi} \left\| \nabla f_i \left( \mathbf{x}; \xi \right) - \nabla f \left( \mathbf{x} \right) \right\|^2 \leq \sigma^2, \quad \forall \ i \in [1:N]. \label{eq_bd_var}
	\end{align}
\end{itemize}

\subsection{Convergence Analysis}
\begin{lemma}{(Bounded Variance)}
\label{lemma_online_bd_var}
For $0 \leq s \leq S-1$, the sequences of iterates $\{ \mathbf{x}_{i,t}^{s+1} \}_{i,t}$ and $\{ \mathbf{v}_{i,t}^{s+1} \}_{i,t}$ generated by Algorithm \ref{alg1} satisfy 
\begin{align}
    E_0^{s+1} = \mathbb{E} \Big\| \bar{\mathbf{v}}_{0}^{s+1} - \mfrac{1}{N} \sum_{i=1}^N \nabla f_i \left( \mathbf{x}_{i,0}^{s+1} \right) \Big\|^2 \leq \frac{\sigma^2}{N n_b} .\label{eq_lemma_online_1}
\end{align}
\end{lemma}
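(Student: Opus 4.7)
The plan is to unravel the recursion defining $\bar{\mathbf{v}}_0^{s+1}$ and $\mathbf{x}_{i,0}^{s+1}$, and then show that the quantity inside the norm is an average of mutually independent, mean-zero terms whose individual variances are controlled by Assumption (A3).

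First I would trace Algorithm~\ref{alg2} to rewrite the two quantities in terms of end-of-epoch values. Steps \ref{alg2_v_init}, \ref{alg2_v_avg_comp}, and \ref{alg2_v_avg_assign} give
\[
\bar{\mathbf{v}}_{0}^{s+1} \;=\; \mathbf{v}_{i,0}^{s+1} \;=\; \mathbf{v}_{i,m}^{s} \;=\; \bar{\mathbf{v}}_{m}^{s} \;=\; \frac{1}{N}\sum_{j=1}^{N}\frac{1}{n_b}\sum_{\xi_j}\nabla f_j\!\left(\bar{\mathbf{x}}_{m}^{s};\xi_j\right),
\]
while steps \ref{alg2_x_init}, \ref{alg2_x_avg_comp}, and \ref{alg2_x_avg_assign} yield $\mathbf{x}_{i,0}^{s+1}=\bar{\mathbf{x}}_{m}^{s}$ for every $i$. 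Substituting these into the expression for $E_0^{s+1}$ collapses it to
\[
E_0^{s+1} \;=\; \mathbb{E}\left\|\frac{1}{N}\sum_{j=1}^{N} Z_j\right\|^2, \qquad Z_j \;\triangleq\; \frac{1}{n_b}\sum_{\xi_j}\nabla f_j(\bar{\mathbf{x}}_{m}^{s};\xi_j) - \nabla f_j(\bar{\mathbf{x}}_{m}^{s}).
\]

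Next I would condition on $\bar{\mathbf{x}}_{m}^{s}$. Because each $\xi_j$ is drawn independently of the samples used by any other node (and since $\nabla f_j(\bar{\mathbf{x}}_{m}^{s};\xi_j)$ is an unbiased estimator of $\nabla f_j(\bar{\mathbf{x}}_{m}^{s})$), the $Z_j$'s are independent and mean zero given $\bar{\mathbf{x}}_{m}^{s}$. Hence the cross terms vanish and the norm of the average splits as
\[
\mathbb{E}\left\|\frac{1}{N}\sum_{j=1}^{N} Z_j\right\|^2 \;=\; \frac{1}{N^2}\sum_{j=1}^{N}\mathbb{E}\|Z_j\|^2.
\]
A second application of the same independent-zero-mean splitting, now inside $Z_j$ over the $n_b$ i.i.d.\ samples in the mini-batch, gives
\[
\mathbb{E}\|Z_j\|^2 \;=\; \frac{1}{n_b}\,\mathbb{E}_{\xi_j}\bigl\|\nabla f_j(\bar{\mathbf{x}}_{m}^{s};\xi_j)-\nabla f_j(\bar{\mathbf{x}}_{m}^{s})\bigr\|^2.
\]

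Finally I would invoke Assumption (A3). Since $\nabla f_j(\bar{\mathbf{x}}_{m}^{s})=\mathbb{E}_{\xi_j}[\nabla f_j(\bar{\mathbf{x}}_{m}^{s};\xi_j)]$ minimizes the expected squared distance, we have
\[
\mathbb{E}_{\xi_j}\bigl\|\nabla f_j(\bar{\mathbf{x}}_{m}^{s};\xi_j)-\nabla f_j(\bar{\mathbf{x}}_{m}^{s})\bigr\|^2 \;\leq\; \mathbb{E}_{\xi_j}\bigl\|\nabla f_j(\bar{\mathbf{x}}_{m}^{s};\xi_j)-\nabla f(\bar{\mathbf{x}}_{m}^{s})\bigr\|^2 \;\leq\; \sigma^2,
\]
so that $\mathbb{E}\|Z_j\|^2\le \sigma^2/n_b$ and therefore $E_0^{s+1}\le \sigma^2/(N n_b)$. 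Taking total expectation removes the conditioning and delivers \eqref{eq_lemma_online_1}. The argument is essentially a bookkeeping exercise, so there is no real obstacle; the only subtlety to be careful about is that (A3) is stated with respect to $\nabla f$ rather than $\nabla f_j$, which is why I need the one-line observation that the mean minimizes mean-squared deviation before applying (A3).
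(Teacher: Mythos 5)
Your proposal is correct and follows essentially the same route as the paper's proof: unroll the algorithm to express $\bar{\mathbf{v}}_0^{s+1}$ as a double average of stochastic gradients at $\bar{\mathbf{x}}_m^s$, kill the cross terms using independence across nodes and across samples within a mini-batch, and bound each remaining variance term by $\sigma^2$ via (A3). Your extra one-line observation that the mean minimizes mean-squared deviation (to bridge the fact that (A3) is stated relative to $\nabla f$ rather than $\nabla f_i$) is a point the paper silently skips, so your write-up is if anything slightly more careful.
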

\begin{proof}
See Appendix \ref{app_lemma_online_bd_var}.
\end{proof}

Compared to the finite-sum setting, the only difference now is that $E_0^{s+1} \neq 0$. Both, Lemma \ref{lemma_finite_bd_var} and \ref{lemma_finite_nw_error} hold true. Next we state the descent lemma for the online case. 

\begin{lemma}{(Descent Lemma)}
\label{lemma_online_descent}
In each epoch $s \in [1,S]$,
\begin{align}
    \mathbb{E} f \left( \bar{\mathbf{x}}_{m}^{s+1} \right) & \leq \mathbb{E} f \left( \bar{\mathbf{x}}_{0}^{s+1} \right) - \frac{\gamma}{2} \sum_{t=0}^{m-1} \mathbb{E} \big\| \nabla f \left( \bar{\mathbf{x}}_{t}^{s+1} \right) \big\|^2 - \frac{\gamma}{2} (1-L \gamma) \sum_{t=0}^{m-1} \mathbb{E} \big\| \bar{\mathbf{v}}_{t}^{s+1} \big\|^2 \nonumber \\
    & + \sum_{t=0}^{m-1} \mathbb{E} \Big\| \bar{\mathbf{v}}_{t}^{s+1} \Big\|^2 \left[ \frac{64 \gamma^5 L^4 m}{N B \delta^2} + \frac{2 \gamma^{3} L^2 m}{N B} + \frac{256 \gamma^{5} L^4}{\delta^4} \right] + \frac{\gamma \sigma^2 m}{N n_b}. \label{eq_lemma_online_descent}
\end{align}
\end{lemma}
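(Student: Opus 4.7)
The plan is to retrace the derivation of the finite-sum descent lemma (Lemma \ref{lemma_finite_descent}) essentially verbatim, but this time without invoking the equality $E_0^{s+1}=0$. Inspecting the argument that produced \eqref{eq_lemma_finite_descent}, Lemma \ref{lemma_finite_bd_var} already carries an explicit $E_0^{s+1}$ term on the right-hand side; its vanishing in the finite-sum case was used only in the final clean-up. The remaining ingredients, namely the $L$-smoothness expansion in \eqref{eq_upbd_f_avg_Lipschitz}, the splitting in \eqref{eq_upbd_gradf_avgv_1}--\eqref{eq_upbd_gradf_avgv_4}, and the two network-disagreement bounds of Lemma \ref{lemma_finite_nw_error}, are valid statement-for-statement in the online setting since none of their proofs rely on $E_0^{s+1}=0$.

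Concretely, I would start from \eqref{eq_upbd_f_avg_Lipschitz}, upper-bound $\mathbb{E}\bigl\| \nabla f(\bar{\mathbf{x}}_t^{s+1}) - \bar{\mathbf{v}}_t^{s+1} \bigr\|^2$ as in \eqref{eq_upbd_gradf_avgv_5}, and carry the additive $2 E_0^{s+1}$ in that bound through the rest of the computation. Substituting into \eqref{eq_upbd_f_avg_Lipschitz}, summing over $t=0,\ldots,m-1$, and multiplying by $\gamma/2$ then reproduces every coefficient inside the bracket of \eqref{eq_lemma_finite_descent}, plus the extra contribution
\[
\frac{\gamma}{2}\sum_{t=0}^{m-1} 2\,E_0^{s+1} \;=\; \gamma m\, E_0^{s+1}.
\]
At that point I would invoke Lemma \ref{lemma_online_bd_var} to bound $E_0^{s+1} \leq \sigma^2/(N n_b)$, yielding the single additional term $\gamma \sigma^2 m / (N n_b)$ that distinguishes \eqref{eq_lemma_online_descent} from \eqref{eq_lemma_finite_descent}.

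There is no real conceptual obstacle here, because the analytical machinery of Section \ref{sec_exp_f} was already set up to accommodate a nonzero $E_0^{s+1}$, and the only quantity that changes between Algorithms \ref{alg1} and \ref{alg2} is how the outer-loop reset direction $\bar{\mathbf{v}}_m^{s+1}$ is formed (exact gradient versus size-$n_b$ batch gradient), which is precisely what Lemma \ref{lemma_online_bd_var} controls. The only sanity check worth stating explicitly is that Lemma \ref{lemma_finite_bd_var} continues to apply in the online setting: its proof relies solely on the inner-loop SPIDER recursion of step \ref{alg2_inner_loop_dir}, which is identical to step \ref{alg1_inner_loop_dir}, so it transfers without modification. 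After that check, the remainder of the argument is purely mechanical bookkeeping.
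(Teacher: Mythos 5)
Your proposal is correct and follows the paper's own proof essentially verbatim: the paper likewise reuses the finite-sum derivation through \eqref{eq_upbd_Phi_t}, keeps the $\gamma E_0^{s+1}$ term, sums over $t=0,\ldots,m-1$, bounds $E_0^{s+1}\leq \sigma^2/(N n_b)$ via Lemma \ref{lemma_online_bd_var}, and applies Lemmas \ref{lem_finite_descent1}--\ref{lem_finite_descent3} to the remaining sums. The only (immaterial) difference is that the paper substitutes the bound on $E_0^{s+1}$ before summing over $t$ rather than after.
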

\begin{proof}
See Appendix \ref{app_lemma_online_descent}.
\end{proof}
Note that $\frac{\gamma \sigma^2 m}{N n_b}$ is the only extra term compared to \eqref{eq_lemma_finite_descent}. Rearranging the terms in \eqref{eq_lemma_online_descent}, and summing over epoch index $s$, analogous to \eqref{eq_favg_in_epoch2} and \eqref{eq_favg_over_all_epcohs} we get
\begin{align}
& \frac{\gamma}{2} \sum_{t=0}^{m-1} \mathbb{E} \big\| \nabla f \left( \bar{\mathbf{x}}_{t}^{s+1} \right) \big\|^2 + \frac{\gamma}{2} \left( 1 - L \gamma - \left[ \frac{128 \gamma^{4} L^4 m}{N B \delta^2} + \frac{4 \gamma^{2} L^2 m}{N B} + \frac{512 \gamma^{4} L^4}{\delta^4} \right] \right) \sum_{t=0}^{m-1} \mathbb{E} \big\| \bar{\mathbf{v}}_{t}^{s+1} \big\|^2 + \frac{\gamma \sigma^2 m}{N n_b} \nonumber \\
& \quad \leq \left( \mathbb{E} f \left( \bar{\mathbf{x}}_{0}^{s+1} \right) - \mathbb{E} f \left( \bar{\mathbf{x}}_{m}^{s+1} \right) \right) \label{eq_favg_online_in_epoch2} \\
\Rightarrow & \frac{1}{T} \sum_{s=0}^{S-1} \sum_{t=0}^{m-1} \mathbb{E} \big\| \nabla f \left( \bar{\mathbf{x}}_{t}^{s+1} \right) \big\|^2 + \left( 1 - L \gamma - \left[ \frac{128 \gamma^{4} L^4 m}{N B \delta^2} + \frac{4 \gamma^{2} L^2 m}{N B} + \frac{512 \gamma^{4} L^4}{\delta^4} \right] \right) \frac{1}{T} \sum_{s=0}^{S-1}  \sum_{t=0}^{m-1} \mathbb{E} \big\| \bar{\mathbf{v}}_{t}^{s+1} \big\|^2 \nonumber \\
& \quad \leq \frac{2}{T \gamma} \sum_{s=0}^{S-1} \left( \mathbb{E} f \left( \bar{\mathbf{x}}_{0}^{s+1} \right) - \mathbb{E} f \left( \bar{\mathbf{x}}_{m}^{s+1} \right) \right) + \frac{2}{T \gamma} \sum_{s=0}^{S-1} \frac{\gamma \sigma^2 m}{N n_b} \nonumber \\
& \quad = \frac{2}{T \gamma} \left( \mathbb{E} f \left( \bar{\mathbf{x}}^{0} \right) - \mathbb{E} f \left( \bar{\mathbf{x}}_{m}^{S+1} \right) \right) + \frac{2 \sigma^2}{N n_b} \nonumber \\
& \quad \leq \frac{2 \left( f(\mathbf{x}^0) - f_{\ast} \right)}{T \gamma} + \frac{2 \sigma^2}{N n_b}. \label{eq_favg_online_over_all_epcohs}
\end{align}
Similar to \eqref{eq_favg_over_all_epcohs}, for small enough step size $\gamma$, \eqref{eq_coeff_avg_v} holds. \pk{Next, we present the convergence of PR-SPIDER for the online case \eqref{eq_problem_online}. } 

\begin{theorem}{(Convergence)}
\label{thm_conv_online_sum}
\label{thm}
For the finite-sum problem under Assumptions (A1)-(A2), for small enough step size $0 < \gamma < \frac{1}{8IL}$, 
\begin{align}
    \min_{s,t} \left[ \mathbb{E} \left\| \nabla f \left( \bar{\mathbf{x}}_{t}^{s+1} \right) \right\|^2 + \frac{1}{N} \sum_{i=1}^N \mathbb{E} \left\| \mathbf{x}_{i,t}^{s+1} - \bar{\mathbf{x}}_{t}^{s+1} \right\|^2 \right] \leq \frac{2 \left( f(\mathbf{x}^0) - f_{\ast} \right)}{T \gamma} + \frac{2 \sigma^2}{N n_b}. \label{eq_final_bd_online_3}
\end{align}
\end{theorem}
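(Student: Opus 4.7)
My plan is to mirror the proof of Theorem \ref{thm_conv_finite_sum}, leveraging that essentially all the hard work is already packaged in \eqref{eq_favg_online_over_all_epcohs} (the online analogue of \eqref{eq_favg_over_all_epcohs}) together with the coefficient condition \eqref{eq_coeff_avg_v}, which still holds under the stated step-size restriction $\gamma < \frac{1}{8IL}$ since the left-hand side of \eqref{eq_coeff_avg_v} does not involve $\sigma$ or $n_b$. The only structural difference between the online and finite-sum settings appears in the additive $\frac{2\sigma^2}{Nn_b}$ term carried over from Lemma \ref{lemma_online_bd_var}, and this extra term is already sitting on the right-hand side of \eqref{eq_favg_online_over_all_epcohs}.

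First, I would apply \eqref{eq_coeff_avg_v} inside \eqref{eq_favg_online_over_all_epcohs} to obtain
\begin{align*}
\frac{1}{T}\sum_{s=0}^{S-1}\sum_{t=0}^{m-1}\mathbb{E}\bigl\|\nabla f(\bar{\mathbf{x}}_t^{s+1})\bigr\|^2 + \frac{1}{2T}\sum_{s=0}^{S-1}\sum_{t=0}^{m-1}\mathbb{E}\bigl\|\bar{\mathbf{v}}_t^{s+1}\bigr\|^2 \leq \frac{2(f(\mathbf{x}^0)-f_{\ast})}{T\gamma} + \frac{2\sigma^2}{Nn_b}.
\end{align*}
The gradient-average term is now immediately controlled. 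The remaining task is to absorb the consensus error term $\frac{1}{N}\sum_i \mathbb{E}\|\mathbf{x}_{i,t}^{s+1} - \bar{\mathbf{x}}_t^{s+1}\|^2$ into the above inequality so that its time average is also bounded by the right-hand side.

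Second, I would substitute the bound \eqref{eq_lemma_finite_nw_error_1} for $\sum_i\mathbb{E}\|\mathbf{v}_{i,j}^{s+1}-\bar{\mathbf{v}}_j^{s+1}\|^2$ into \eqref{eq_lemma_finite_nw_error_2} to obtain a bound of the form
\begin{align*}
\frac{1}{N}\sum_{i=1}^N \mathbb{E}\bigl\|\mathbf{x}_{i,t}^{s+1}-\bar{\mathbf{x}}_t^{s+1}\bigr\|^2 \leq C(\gamma,L,\alpha,\delta,I)\sum_{j=0}^{t-1}\mathbb{E}\bigl\|\bar{\mathbf{v}}_j^{s+1}\bigr\|^2,
\end{align*}
where $C(\gamma,L,\alpha,\delta,I) = O(\gamma^4 L^2 I^2)$ once the two geometric sums are bounded by $I(1+\alpha)^I$ and $I(1+\theta)^I$ with $\alpha,\delta$ chosen as $O(1/I)$. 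Summing this over $s,t$ and dividing by $T$ replaces the outer sum by a constant-factor multiple of the average of $\mathbb{E}\|\bar{\mathbf{v}}_j^{s+1}\|^2$, which is already bounded by the displayed inequality above. Under the hypothesis $\gamma < \frac{1}{8IL}$, the constant $C(\gamma,L,\alpha,\delta,I)$ is sufficiently small that this average consensus error is at most a constant multiple of $\frac{2(f(\mathbf{x}^0)-f_\ast)}{T\gamma} + \frac{2\sigma^2}{Nn_b}$, and in fact this constant can be taken to be $1$ after adjusting the leading constants in the statement (or equivalently tightening $\gamma$ by a universal factor).

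Finally, adding the gradient-average bound to the consensus-average bound yields a bound on $\frac{1}{T}\sum_{s,t}\bigl[\mathbb{E}\|\nabla f(\bar{\mathbf{x}}_t^{s+1})\|^2 + \frac{1}{N}\sum_i\mathbb{E}\|\mathbf{x}_{i,t}^{s+1}-\bar{\mathbf{x}}_t^{s+1}\|^2\bigr]$, and since the minimum over $s,t$ is at most this average, the claim \eqref{eq_final_bd_online_3} follows. The only non-routine obstacle is verifying the bookkeeping in step two: namely, choosing $\alpha,\delta$ (say $\alpha=\delta=1/I$) so that both the geometric factors $(1+\alpha)^I$ and $(1+\theta)^I$ remain $O(1)$ and so that $C(\gamma,L,\alpha,\delta,I)$ stays below the $1/2$ margin afforded by \eqref{eq_coeff_avg_v}; this is exactly the same calculation carried out for the finite-sum theorem, and the step-size restriction $\gamma<\frac{1}{8IL}$ is dictated by it.
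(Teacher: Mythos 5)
Your proposal is correct and follows essentially the same route as the paper: bound the minimum by the time average, absorb the consensus error via Lemma \ref{lemma_finite_nw_error} (composed, as in \eqref{eq_total_avg_x_avgx_1}--\eqref{eq_total_avg_x_avgx_3}) into $\tfrac12\,\mathbb{E}\|\bar{\mathbf{v}}_t^{s+1}\|^2$, and then invoke \eqref{eq_coeff_avg_v} together with \eqref{eq_favg_online_over_all_epcohs}. The only nit is bookkeeping: the composed consensus constant scales like $O(\gamma^4 L^2/\delta^4)=O(\gamma^4 L^2 I^4)$ rather than your stated $O(\gamma^4 L^2 I^2)$, but this does not affect the conclusion under $\gamma<\frac{1}{8IL}$.
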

\begin{proof}
See Appendix \ref{app_thm_conv_online_sum}.
\end{proof}
\pk{Note that, in comparison to Theorem \ref{thm_conv_finite_sum}, in Theorem \ref{thm_conv_online_sum} we have an additional term ${2 \sigma^2}/{N n_b}$, which accounts for the variance of the gradients computed at the start of each epoch. }

\pk{Similar to the finite sample case \eqref{eq_problem_finite}, we use Theorem \ref{thm_conv_online_sum} to compute the bounds on the sample complexity (see Definition \ref{Def: Comp}) and the communication complexity (see Definition \ref{Def: Comm}) of the proposed algorithm for the online problem \eqref{eq_problem_online}. In the following, we choose the $T$ and the parameters $n_b$, $m$ and $B$ such that such that the algorithm reaches an $\epsilon$-FoS point (see Definition \ref{Def: FoS}) and at the same time the sample and the communication complexity are minimized. }

\subsection{Sample Complexity}
\label{subsec_samp_comp}
Number of iterations $T$ satisfies
\begin{align}
    & \frac{2 \left( f(\mathbf{x}^0) - f_{\ast} \right)}{T \gamma} = \epsilon \quad \Rightarrow \quad T = \frac{2 \left( f(\mathbf{x}^0) - f_{\ast} \right)}{\gamma \epsilon} = \frac{C I}{\epsilon},
\end{align}
for constants $C, I$.  And the batch size $n_b$ to compute the gradient estimators at the start of each epoch satisfy
\begin{align}
    \frac{2 \sigma^2}{N n_b} = \frac{\epsilon}{2} \quad \Rightarrow \quad n_b = \frac{4 \sigma^2}{N \epsilon}.
\end{align} Then for $m = I \sqrt{N n_b}, B = \frac{1}{I} \sqrt{\frac{n_b}{N}}$
\begin{align*}
    & N \times \left( \left \lceil \frac{T}{m} \right \rceil \cdot n_b + T \cdot B \right) \leq N \times \left( \left( \frac{C I}{m \epsilon} + 1 \right) \cdot n_b + \frac{C I}{\epsilon} \cdot B \right) \\
    & \leq N \times \left( \frac{C I}{\epsilon} \cdot \left( \mfrac{n_b}{m} + B \right) + n_b \right) \\
    &= O \left(  \frac{\sqrt{N n_b}}{\epsilon} + N n_b \right) \\
    &= O \left(  \frac{\sigma}{\epsilon^{3/2}} + \frac{\sigma^2}{\epsilon} \right).
\end{align*}

\subsection{Communication Complexity}
Since communication happens once every $I$ iterations, the communication complexity is
\begin{align*}
    \left \lceil \frac{T}{I} \right \rceil \leq 1 + \frac{C}{\epsilon} = O \left( \frac{1}{\epsilon} \right).
\end{align*}

\section{Conclusion}
\label{sec_conclude}
In this paper, we proposed a distributed variance-reduced algorithm, PR-SPIDER, for stochastic non-convex optimization. Our algorithm is a non-trivial extension of SPIDER \cite{spider18neurips}, the single-node stochastic optimization algorithm, and parallel-restarted SGD \cite{restarted_sgd19aaai, yu19icml_batchsize}. We proved convergence of our algorithm to a first-order stationary solution. The proposed approach achieves the best known communication complexity $O(\epsilon^{-1})$. In terms of IFO complexity, we have achieved the optimal rate, significantly improving the state-of-the-art, while maintaining the linear speedup achieved by existing methods. For finite-sum problems, we achieved the optimal $O(\frac{\sqrt{N n}}{\epsilon})$ overall IFO complexity. On the other hand, for online problems, we achieved the optimal $O \left( \frac{\sigma}{\epsilon^{3/2}} + \frac{\sigma^2}{\epsilon} \right)$, a massive improvement over the existing $O(\frac{\sigma^2}{\epsilon^2})$. In addition, unlike many existing approaches, our algorithm is general enough to allow non-identical distributions of data across workers.



\bibliographystyle{IEEEtran}
\bibliography{abrv,References}


\newpage
\appendix
\newpage
\section{Finite-sum case}
\subsection{Proof of Lemma \ref{lemma_finite_bd_var}}
\label{app_lemma_finite_bd_var}
\begin{proof} We have
\begin{align}
    & \mathbb{E} \left\| \bar{\mathbf{v}}_{t}^{s+1} - \mfrac{1}{N} \sum\nolimits_{i=1}^N \nabla f_i \left( \mathbf{x}_{i,t}^{s+1} \right) \right\|^2 \nonumber \\
    &= \mathbb{E} \Big\| \bar{\mathbf{v}}_{t-1}^{s+1} - \mfrac{1}{N} \sum\nolimits_{i=1}^N \nabla f_i \left( \mathbf{x}_{i,t-1}^{s+1} \right) + \mfrac{1}{N} \sum\nolimits_{j=1}^N \mfrac{1}{B} \sum\nolimits_{\xi_{j,t}^{s+1}} \left[ \nabla f_j \big( \mathbf{x}_{j,t}^{s+1}; \xi_{j, t}^{s+1} \big) - \nabla f_j \big( \mathbf{x}_{j,t-1}^{s+1}; \xi_{j, t}^{s+1} \big) \right] \nonumber \\
    & \qquad \qquad + \mfrac{1}{N} \sum\nolimits_{i=1}^N \left[ \nabla f_i \left( \mathbf{x}_{i,t-1}^{s+1} \right) - \nabla f_i \left( \mathbf{x}_{i,t}^{s+1} \right) \right] \Big\|^2 \label{eq_upbd_avg_gradf_i_avgv_1a} \\
    &= \mathbb{E} \left\| \bar{\mathbf{v}}_{t-1}^{s+1} - \mfrac{1}{N} \sum\nolimits_{i=1}^N \nabla f_i \left( \mathbf{x}_{i,t-1}^{s+1} \right) \right\|^2 \nonumber \\
    & + \mathbb{E} \Big\| \mfrac{1}{N} \sum_{i=1}^N \mfrac{1}{B} \sum_{\xi_{i,t}^{s+1}} \left[ \nabla f_i \big( \mathbf{x}_{i,t}^{s+1}; \xi_{i, t}^{s+1} \big) - \nabla f_i \big( \mathbf{x}_{i,t-1}^{s+1}; \xi_{i, t}^{s+1} \big) + \nabla f_i \left( \mathbf{x}_{i,t-1}^{s+1} \right) - \nabla f_i \left( \mathbf{x}_{i,t}^{s+1} \right) \right] \Big\|^2 \nonumber \\
    & + \mathbb{E} \left\langle \bar{\mathbf{v}}_{t-1}^{s+1} - \mfrac{1}{N} \sum_{i=1}^N \nabla f_i \left( \mathbf{x}_{i,t-1}^{s+1} \right), \underbrace{\mfrac{1}{N} \sum_{i=1}^N \mfrac{1}{B} \sum_{\xi_{i,t}^{s+1}} \left[ \nabla f_i \big( \mathbf{x}_{i,t}^{s+1}; \xi_{i, t}^{s+1} \big) - \nabla f_i \big( \mathbf{x}_{i,t-1}^{s+1}; \xi_{i, t}^{s+1} \big) + \nabla f_i \left( \mathbf{x}_{i,t-1}^{s+1} \right) - \nabla f_i \left( \mathbf{x}_{i,t}^{s+1} \right) \right]}_{\mathbb{E}(\cdot) = \mathbf{0}} \right\rangle \label{eq_upbd_avg_gradf_i_avgv_1} \\
    &= \mathbb{E} \left\| \bar{\mathbf{v}}_{t-1}^{s+1} - \mfrac{1}{N} \sum\nolimits_{i=1}^N \nabla f_i \left( \mathbf{x}_{i,t-1}^{s+1} \right) \right\|^2 \nonumber \\
    & \quad + \mfrac{1}{N^2} \sum\nolimits_{i=1}^N \mathbb{E} \Big\| \mfrac{1}{B} \sum\nolimits_{\xi_{i,t}^{s+1}} \left[ \nabla f_i \big( \mathbf{x}_{i,t}^{s+1}; \xi_{i, t}^{s+1} \big) - \nabla f_i \big( \mathbf{x}_{i,t-1}^{s+1}; \xi_{i, t}^{s+1} \big) + \nabla f_i \left( \mathbf{x}_{i,t-1}^{s+1} \right) - \nabla f_i \left( \mathbf{x}_{i,t}^{s+1} \right) \right] \Big\|^2 \nonumber \\
    & \quad + \mfrac{1}{N^2} \sum\nolimits_{i \neq j} \mathbb{E} \left\langle \mfrac{1}{B} \sum\nolimits_{\xi_{i,t}^{s+1}} \left[ \nabla f_i \big( \mathbf{x}_{i,t}^{s+1}; \xi_{i, t}^{s+1} \big) - \nabla f_i \big( \mathbf{x}_{i,t-1}^{s+1}; \xi_{i, t}^{s+1} \big) + \nabla f_i \left( \mathbf{x}_{i,t-1}^{s+1} \right) - \nabla f_i \left( \mathbf{x}_{i,t}^{s+1} \right) \right], \right. \nonumber \\
    & \qquad \qquad \qquad \qquad \left. \mfrac{1}{B} \sum\nolimits_{\xi_{j,t}^{s+1}} \left[ \nabla f_j \big( \mathbf{x}_{j,t}^{s+1}; \xi_{j, t}^{s+1} \big) - \nabla f_j \big( \mathbf{x}_{j,t-1}^{s+1}; \xi_{j, t}^{s+1} \big) + \nabla f_j \left( \mathbf{x}_{j,t-1}^{s+1} \right) - \nabla f_j \left( \mathbf{x}_{j,t}^{s+1} \right) \right] \right\rangle \label{eq_upbd_avg_gradf_i_avgv_2} \\
    &= \mathbb{E} \left\| \bar{\mathbf{v}}_{t-1}^{s+1} - \mfrac{1}{N} \sum\nolimits_{i=1}^N \nabla f_i \left( \mathbf{x}_{i,t-1}^{s+1} \right) \right\|^2 \nonumber \\
    & \quad + \mfrac{1}{N^2} \sum\nolimits_{i=1}^N \mfrac{1}{B^2} \mathbb{E} \sum\nolimits_{\xi_{i,t}^{s+1}} \Big\| \nabla f_i \big( \mathbf{x}_{i,t}^{s+1}; \xi_{i, t}^{s+1} \big) - \nabla f_i \big( \mathbf{x}_{i,t-1}^{s+1}; \xi_{i, t}^{s+1} \big) + \nabla f_i \left( \mathbf{x}_{i,t-1}^{s+1} \right) - \nabla f_i \left( \mathbf{x}_{i,t}^{s+1} \right) \Big\|^2 \nonumber \\
    & \quad + \mfrac{1}{N^2} \sum\nolimits_{i=1}^N \frac{1}{B^2}  \mathbb{E} \sum\nolimits_{\xi_{i,t}^{s+1} \neq \zeta_{i,t}^{s+1}}  \left\langle \underbrace{\mathbb{E}_{\xi_{i,t}^{s+1}} \left[ \nabla f_i \big( \mathbf{x}_{i,t}^{s+1}; \xi_{i, t}^{s+1} \big) - \nabla f_i \big( \mathbf{x}_{i,t-1}^{s+1}; \xi_{i, t}^{s+1} \big) \right] + \nabla f_i \left( \mathbf{x}_{i,t-1}^{s+1} \right) - \nabla f_i \left( \mathbf{x}_{i,t}^{s+1} \right)}_{= \mathbf{0}}, \right. \nonumber \\
    & \qquad \qquad \qquad \qquad \qquad \qquad \qquad \qquad \left. \underbrace{\mathbb{E}_{\zeta_{i,t}^{s+1}} \left[ \nabla f_i \big( \mathbf{x}_{i,t}^{s+1}; \zeta_{i, t}^{s+1} \big) - \nabla f_i \big( \mathbf{x}_{i,t-1}^{s+1}; \zeta_{i, t}^{s+1} \big) \right] + \nabla f_i \left( \mathbf{x}_{i,t-1}^{s+1} \right) - \nabla f_i \left( \mathbf{x}_{i,t}^{s+1} \right)}_{= \mathbf{0}} \right\rangle \label{eq_upbd_avg_gradf_i_avgv_3} \\
    & \leq \mathbb{E} \left\| \bar{\mathbf{v}}_{t-1}^{s+1} - \mfrac{1}{N} \sum\nolimits_{i=1}^N \nabla f_i \left( \mathbf{x}_{i,t-1}^{s+1} \right) \right\|^2 + \frac{1}{N^2} \sum\nolimits_{i=1}^N \frac{1}{B^2} \mathbb{E} \sum\nolimits_{\xi_{i,t}^{s+1}}  \Big\| \nabla f_i \big( \mathbf{x}_{i,t}^{s+1}; \xi_{i, t}^{s+1} \big) - \nabla f_i \big( \mathbf{x}_{i,t-1}^{s+1}; \xi_{i, t}^{s+1} \big)\Big\|^2 \label{eq_upbd_avg_gradf_i_avgv_4} \\
    & \leq \mathbb{E} \left\| \bar{\mathbf{v}}_{t-1}^{s+1} - \mfrac{1}{N} \sum\nolimits_{i=1}^N \nabla f_i \left( \mathbf{x}_{i,t-1}^{s+1} \right) \right\|^2 + \frac{1}{N^2} \sum\nolimits_{i=1}^N \frac{L^2}{B} \mathbb{E} \Big\| \mathbf{x}_{i,t}^{s+1} - \mathbf{x}_{i,t-1}^{s+1} \Big\|^2 \label{eq_upbd_avg_gradf_i_avgv_5} \\
    & \leq \underbrace{\mathbb{E} \left\| \bar{\mathbf{v}}_{0}^{s+1} - \mfrac{1}{N} \sum\nolimits_{i=1}^N \nabla f_i \left( \mathbf{x}_{i,0}^{s+1} \right) \right\|^2}_{E_0^{s+1}} + \frac{L^2}{N^2 B} \sum_{i=1}^N \sum_{\ell = 0}^{t-1} \mathbb{E} \Big\| \mathbf{x}_{i,\ell + 1}^{s+1} - \mathbf{x}_{i,\ell}^{s+1} \Big\|^2 \label{eq_upbd_avg_gradf_i_avgv_6}
\end{align}
\eqref{eq_upbd_avg_gradf_i_avgv_1a} follows from step \ref{alg1_inner_loop_dir} in Algorithm \ref{alg1}. The cross term in \eqref{eq_upbd_avg_gradf_i_avgv_1} is zero since
\begin{align}
    \mathbb{E}_{\xi_{i, t}^{s+1}} \left[ \nabla f_i \big( \mathbf{x}_{i,t}^{s+1}; \xi_{i, t}^{s+1} \big) - \nabla f_i \big( \mathbf{x}_{i,t-1}^{s+1}; \xi_{i, t}^{s+1} \big) \right] = \nabla f_i \left( \mathbf{x}_{i,t}^{s+1} \right) - \nabla f_i \left( \mathbf{x}_{i,t-1}^{s+1} \right), \label{eq_stoch_grad_unbias}
\end{align}
for all $\xi_{i, t}^{s+1} \in \mathcal{B}_{i,t}^{s+1}$, $\forall \ t, \forall \ s, \forall \ i \in [1:N]$. The cross term in \eqref{eq_upbd_avg_gradf_i_avgv_2} is zero since the mini-batches $\mathcal{B}_{i,t}^{s+1}$ are sampled uniformly randomly, and independently at all the nodes $i \in [1:N]$. The cross term in \eqref{eq_upbd_avg_gradf_i_avgv_3} is zero since at a single node $i$, the samples in the mini-batch $\mathcal{B}_{i,t}^{s+1}$ are sampled independent of each other. \eqref{eq_upbd_avg_gradf_i_avgv_4} follows from \eqref{eq_stoch_grad_unbias} and using $\mathbb{E} \left\| \mathbf{x} - \mathbb{E} (\mathbf{x}) \right\|^2 \leq \mathbb{E} \left\| \mathbf{x} \right\|^2$. \eqref{eq_upbd_avg_gradf_i_avgv_5} follows from mean-squared L-smooth property of each stochastic function $f_i(\cdot, \xi)$. \eqref{eq_upbd_avg_gradf_i_avgv_6} follows by recursive application of \eqref{eq_upbd_avg_gradf_i_avgv_5}, to the beginning of the epoch.
\end{proof}

\subsection{Proof of Lemma \ref{lemma_finite_nw_error}}
\label{app_lemma_finite_nw_error}
\begin{proof}
First we prove \eqref{eq_lemma_finite_nw_error_1}. For $\ell$ such that $\ell \text{ mod } I \neq 0$
\begin{align}
& \sum_{i=1}^N \mathbb{E} \left\| \mathbf{v}_{i,\ell}^{s+1} - \bar{\mathbf{v}}_{\ell}^{s+1}  \right\|^2 \leq \sum_{i=1}^N \Bigg[ \left( 1 + \delta \right) \mathbb{E} \left\| \mathbf{v}_{i,\ell-1}^{s+1} - \bar{\mathbf{v}}_{\ell-1}^{s+1}  \right\|^2 + \left( 1 + \mfrac{1}{\delta} \right) \mathbb{E} \Big\| \mfrac{1}{B} \sum_{\xi_{i,\ell}^{s+1}} \left\{ \nabla f_i \big( \mathbf{x}_{i,\ell}^{s+1}; \xi_{i, \ell}^{s+1} \big) - \nabla f_i \big( \mathbf{x}_{i,\ell-1}^{s+1}; \xi_{i, \ell}^{s+1} \big) \right\} \nonumber \\
& \qquad \qquad \qquad \qquad \qquad - \mfrac{1}{N} \sum_{j=1}^N \mfrac{1}{B} \sum_{\xi_{j,\ell}^{s+1}} \left\{ \nabla f_j \big( \mathbf{x}_{j,\ell}^{s+1}; \xi_{j, \ell}^{s+1} \big) - \nabla f_j \big( \mathbf{x}_{j,\ell-1}^{s+1}; \xi_{j, \ell}^{s+1} \big) \right\} \Big\|^2 \Bigg] \label{eq_upbd_v_avgv_1} \\
& \leq \sum_{i=1}^N \Bigg[ \left( 1 + \delta \right) \mathbb{E} \left\| \mathbf{v}_{i,\ell-1}^{s+1} - \bar{\mathbf{v}}_{\ell-1}^{s+1} \right\|^2 + \left( 1 + \mfrac{1}{\delta} \right) 2 \mathbb{E} \Big\| \frac{1}{B} \sum_{\xi_{i,\ell}^{s+1}} \left\{ \nabla f_i \big( \mathbf{x}_{i,\ell}^{s+1}; \xi_{i, \ell}^{s+1} \big) - \nabla f_i \big( \mathbf{x}_{i,\ell-1}^{s+1}; \xi_{i, \ell}^{s+1} \big) \right\} \Big\|^2 \nonumber \\
& \qquad \qquad + \left( 1 + \mfrac{1}{\delta} \right) 2 \mathbb{E} \Big\| \frac{1}{N} \sum_{j=1}^N \frac{1}{B} \sum_{\xi_{j,\ell}^{s+1}} \left\{ \nabla f_j \big( \mathbf{x}_{j,\ell}^{s+1}; \xi_{j, \ell}^{s+1} \big) - \nabla f_j \big( \mathbf{x}_{j,\ell-1}^{s+1}; \xi_{j, \ell}^{s+1} \big) \right\} \Big\|^2 \Bigg] \nonumber \\
& \leq \sum_{i=1}^N \Bigg[ \left( 1 + \delta \right) \mathbb{E} \left\| \mathbf{v}_{i,\ell-1}^{s+1} - \bar{\mathbf{v}}_{\ell-1}^{s+1} \right\|^2 + \left( 1 + \mfrac{1}{\delta} \right) \mfrac{2}{B} \mathbb{E}  \sum_{\xi_{i,\ell}^{s+1}} \Big\| \nabla f_i \big( \mathbf{x}_{i,\ell}^{s+1}; \xi_{i, \ell}^{s+1} \big) - \nabla f_i \big( \mathbf{x}_{i,\ell-1}^{s+1}; \xi_{i, \ell}^{s+1} \big) \Big\|^2 \nonumber \\
& \qquad \qquad + \left( 1 + \mfrac{1}{\delta} \right) \mfrac{2}{N} \sum_{j=1}^N \mfrac{1}{B} \mathbb{E} \sum_{\xi_{j,\ell}^{s+1}}  \Big\| \nabla f_j \big( \mathbf{x}_{j,\ell}^{s+1}; \xi_{j, \ell}^{s+1} \big) - \nabla f_j \big( \mathbf{x}_{j,\ell-1}^{s+1}; \xi_{j, \ell}^{s+1} \big) \Big\|^2 \Bigg] \label{eq_upbd_v_avgv_2} \\
& \leq \sum_{i=1}^N \Big[ \left( 1 + \delta \right) \mathbb{E} \left\| \mathbf{v}_{i,\ell-1}^{s+1} - \bar{\mathbf{v}}_{\ell-1}^{s+1} \right\|^2 + \left( 1 + \mfrac{1}{\delta} \right) 2 L^2 \mathbb{E} \left\| \mathbf{x}_{i,\ell}^{s+1} - \mathbf{x}_{i,\ell-1}^{s+1} \right\|^2 + \left( 1 + \mfrac{1}{\delta} \right) \mfrac{2 L^2}{N} \sum_{j=1}^N \mathbb{E} \left\| \mathbf{x}_{j,\ell}^{s+1} - \mathbf{x}_{j,\ell-1}^{s+1} \right\|^2 \Big] \label{eq_upbd_v_avgv_3} \\
&= \sum_{i=1}^N \Big[ \left( 1 + \delta \right) \mathbb{E} \left\| \mathbf{v}_{i,\ell-1}^{s+1} - \bar{\mathbf{v}}_{\ell-1}^{s+1} \right\|^2 + 4 L^2 \left( 1 + \mfrac{1}{\delta} \right) \mathbb{E} \left\| \mathbf{x}_{i,\ell}^{s+1} - \mathbf{x}_{i,\ell-1}^{s+1} \right\|^2 \Big] \nonumber \\
& \leq \sum_{i=1}^N \Big[ \left( 1 + \delta \right) \mathbb{E} \left\| \mathbf{v}_{i,\ell-1}^{s+1} - \bar{\mathbf{v}}_{\ell-1}^{s+1} \right\|^2 + 8 \gamma^2 L^2 \left( 1 + \mfrac{1}{\delta} \right) \mathbb{E} \left\| \mathbf{v}_{i,\ell-1}^{s+1} - \bar{\mathbf{v}}_{\ell-1}^{s+1}  \right\|^2 + 8 \gamma^2 L^2 \left( 1 + \mfrac{1}{\delta} \right) \mathbb{E} \left\| \bar{\mathbf{v}}_{\ell-1}^{s+1} \right\|^2 \Big] \nonumber \\
& \leq \sum_{i=1}^N \Big( 1 + \underbrace{\delta + 8 \gamma^2 L^2 \big( 1 + \mfrac{1}{\delta} \big)}_{\theta} \Big) \mathbb{E} \left\| \mathbf{v}_{i,\ell-1}^{s+1} - \bar{\mathbf{v}}_{\ell-1}^{s+1} \right\|^2 + 8 \gamma^2 N L^2 \big( 1 + \mfrac{1}{\delta} \big) \mathbb{E} \left\| \bar{\mathbf{v}}_{\ell-1}^{s+1}  \right\|^2 \label{eq_upbd_v_avgv_4}
\end{align}
where, \eqref{eq_upbd_v_avgv_1} follows from step \ref{alg1_inner_loop_dir} in Algorithm \ref{alg1} and Young's inequality, for $\delta > 0$; \eqref{eq_upbd_v_avgv_2} follows from Jensen's inequality; \eqref{eq_upbd_v_avgv_3} follows from the mean-squared $L$-smooth assumption (A1). Applying \eqref{eq_upbd_v_avgv_4} recursively, we obtain

\begin{align}
    & \sum_{i=1}^N \mathbb{E} \left\| \mathbf{v}_{i,\ell}^{s+1} - \bar{\mathbf{v}}_{\ell}^{s+1}  \right\|^2 \leq \sum_{i=1}^N (1 + \theta)^2 \mathbb{E} \left\| \mathbf{v}_{i,\ell-2}^{s+1} - \bar{\mathbf{v}}_{\ell-2}^{s+1}  \right\|^2 + 8 \gamma^2 N L^2 \big( 1 + \mfrac{1}{\delta} \big) \Big[ \mathbb{E} \left\| \bar{\mathbf{v}}_{\ell-1}^{s+1}  \right\|^2 + (1+\theta) \mathbb{E} \left\| \bar{\mathbf{v}}_{\ell-2}^{s+1}  \right\|^2 \Big] \nonumber \\
    & \leq \sum_{i=1}^N (1 + \theta)^{\ell - \tau(\ell)} \mathbb{E} \left\| \mathbf{v}_{i,\tau(\ell)}^{s+1} - \bar{\mathbf{v}}_{\tau(\ell)}^{s+1}  \right\|^2 + 8 \gamma^2 N L^2 \big( 1 + \mfrac{1}{\delta} \big) \sum_{j=\tau(\ell)}^{\ell-1} (1+\theta)^{\ell - 1 - j} \mathbb{E} \left\| \bar{\mathbf{v}}_{j}^{s+1} \right\|^2 \nonumber \\
    &= 8 \gamma^2 N L^2 \big( 1 + \mfrac{1}{\delta} \big) \sum_{j=\tau(\ell)}^{\ell-1} (1+\theta)^{\ell-1-j} \mathbb{E} \left\| \bar{\mathbf{v}}_{j}^{s+1}  \right\|^2\label{eq_upbd_v_avgv_5}
\end{align}
where \eqref{eq_upbd_v_avgv_5} follows since averaging happens at time index $\tau(\ell)$ (step \ref{alg1_v_avg_inloop}, Algorithm \ref{alg1}), i.e., $\mathbf{v}_{i,\tau(\ell)}^{s+1} = \bar{\mathbf{v}}_{\tau(\ell)}^{s+1}$, $\forall \ i \in [1:N]$.

Next, we prove \eqref{eq_lemma_finite_nw_error_2}. For $\ell$ such that $\ell \text{ mod } I \neq 0$
\begin{align}
& \sum_{i=1}^N \mathbb{E} \left\| \mathbf{x}_{i,\ell}^{s+1} - \bar{\mathbf{x}}_{\ell}^{s+1} \right\|^2 = \sum_{i=1}^N \mathbb{E} \left\| \left[ \mathbf{x}_{i,\ell-1}^{s+1} - \gamma \mathbf{v}_{i,\ell-1}^{s+1} \right] - \left[ \bar{\mathbf{x}}_{\ell-1}^{s+1}  -\gamma \bar{\mathbf{v}}_{t-1}^{s+1}  \right] \right\|^2 \nonumber \\
&  \quad \leq \sum_{i=1}^N \left[ (1 + \alpha) \mathbb{E} \left\| \mathbf{x}_{i,\ell-1}^{s+1} - \bar{\mathbf{x}}_{\ell-1}^{s+1}  \right\|^2 + \left( 1 + \mfrac{1}{\alpha} \right) \gamma^2 \mathbb{E} \left\| \mathbf{v}_{i,\ell-1}^{s+1} - \bar{\mathbf{v}}_{\ell-1}^{s+1}  \right\|^2 \right] \label{eq_upbd_x_avgx_1} \\
& \quad \leq \sum_{i=1}^N \left[ (1 + \alpha)^2 \mathbb{E} \left\| \mathbf{x}_{i,\ell-2}^{s+1} - \bar{\mathbf{x}}_{\ell-2}^{s+1}  \right\|^2 + \left( 1 + \mfrac{1}{\alpha} \right) \gamma^2 \left\{ \mathbb{E} \left\| \mathbf{v}_{i,\ell-1}^{s+1} - \bar{\mathbf{v}}_{\ell-1}^{s+1}  \right\|^2 + (1+\alpha) \mathbb{E} \left\| \mathbf{v}_{i,\ell-2}^{s+1} - \bar{\mathbf{v}}_{\ell-2}^{s+1}  \right\|^2 \right\} \right] \nonumber \\
& \quad \leq \sum_{i=1}^N \left[ (1 + \alpha)^{\ell-\tau(\ell)} \mathbb{E} \left\| \mathbf{x}_{i,\tau(\ell)}^{s+1} - \bar{\mathbf{x}}_{\tau(\ell)}^{s+1} \right\|^2 + \left( 1 + \mfrac{1}{\alpha} \right) \gamma^2 \sum_{j=\tau(\ell)}^{\ell - 1} (1+\alpha)^{\ell - 1 - j} \mathbb{E} \left\| \mathbf{v}_{i,j}^{s+1} - \bar{\mathbf{v}}_{j}^{s+1} \right\|^2  \right] \nonumber \\
& \quad = \left( 1 + \mfrac{1}{\alpha} \right) \gamma^2 \sum_{i=1}^N \sum_{j=\tau(\ell)+1}^{\ell-1} (1+\alpha)^{\ell-1-j} \mathbb{E} \left\| \mathbf{v}_{i,j}^{s+1} - \bar{\mathbf{v}}_{j}^{s+1}  \right\|^2 \label{eq_upbd_x_avgx_2}
\end{align}
where \eqref{eq_upbd_x_avgx_1} follows from Young's inequality, with $\alpha>0$; \eqref{eq_upbd_x_avgx_2} follows since averaging happens at time index $\tau(\ell)$ (step \ref{alg1_x_avg_inloop}-\ref{alg1_v_avg_inloop}, Algorithm \ref{alg1}). Consequently $\mathbf{x}_{i,\tau(\ell)}^{s+1} = \bar{\mathbf{x}}_{\tau(\ell)}^{s+1}, \mathbf{v}_{i,\tau(\ell)}^{s+1} = \bar{\mathbf{v}}_{\tau(\ell)}^{s+1}$, $\forall \ i \in [1:N]$. We can further upper bound \eqref{eq_upbd_x_avgx_2} using the bound on $\sum_{i=1}^N \mathbb{E} \| \mathbf{v}_{i,j}^{s+1} - \bar{\mathbf{v}}_{j}^{s+1} \|^2$ in \eqref{eq_upbd_v_avgv_5}.
\end{proof}

\subsection{Proof of Lemma \ref{lemma_finite_descent}}
\label{app_lemma_finite_descent}
\pk{To prove Lemma \ref{lemma_finite_descent}, we need some preliminary results which are given before providing the proof of Lemma \ref{lemma_finite_descent}.   
\subsubsection{Preliminary results required for Lemma \ref{lemma_finite_descent}}
In the following analysis, the following facts shall be utilized repeatedly.
\begin{itemize}
    \item[(F1)] We choose $\delta$, $\gamma$ such that $\delta < \theta = \delta + 8 \gamma^2 L^2 \big( 1 + \frac{1}{\delta} \big) < 2 \delta < 1$.
    \item[(F2)] We choose $\alpha = \frac{\delta}{2}$. Therefore, $\theta - \alpha > \frac{\delta}{2}$.
    \item[(F3)] $\big( 1 + \frac{1}{\delta} \big) < \frac{2}{\delta}$ and $\big( 1 + \frac{1}{\alpha} \big) < \frac{2}{\alpha}$.
    \item[(F4)] $I = \left \lfloor \frac{1}{\theta} \right \rfloor \leq \frac{1}{\theta}$. Also, for $\theta < 1, \left \lfloor \frac{1}{\theta} \right \rfloor \geq \frac{1}{2 \theta} > \frac{1}{4\delta}$.
\end{itemize}
Now, we state first of the three lemmas we will require to prove Lemma \ref{lemma_finite_descent}.
\begin{lemma}
\label{lem_finite_descent1}
We have:
$$\frac{16 \gamma^5 L^4}{N B} \big( 1 + \mfrac{1}{\delta} \big) \sum_{t=0}^{m-1} \sum_{\ell=0}^{t-1} \sum_{j=\tau(\ell)}^{\ell-1} (1+\theta)^{\ell-1-j} \mathbb{E} \Big\| \bar{\mathbf{v}}_{j}^{s+1}  \Big\|^2 \leq \frac{64 \gamma^5 L^4 m}{N B \delta^2} \sum_{t=0}^{m-1} \mathbb{E} \left\| \bar{\mathbf{v}}_{t}^{s+1} \right\|^2.$$
\end{lemma}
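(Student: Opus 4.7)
The plan is to reduce the triple sum to a single sum over $j$ by swapping the order of summation twice and exploiting the definition of $\tau(\cdot)$, then use the preliminary facts (F1)--(F4) to absorb the remaining constants into $\delta^2$. The factor $\frac{16\gamma^5 L^4}{NB}(1+1/\delta)$ is held fixed throughout, and all the work goes into bounding
\[
\mathcal{S} \;\triangleq\; \sum_{t=0}^{m-1} \sum_{\ell=0}^{t-1} \sum_{j=\tau(\ell)}^{\ell-1} (1+\theta)^{\ell-1-j}\, \mathbb{E}\big\|\bar{\mathbf{v}}_j^{s+1}\big\|^2.
\]

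First I would swap the outer $t,\ell$ sums. Interchanging gives $\mathcal{S} = \sum_{\ell=0}^{m-2}(m-1-\ell)\sum_{j=\tau(\ell)}^{\ell-1}(1+\theta)^{\ell-1-j}\mathbb{E}\|\bar{\mathbf{v}}_j^{s+1}\|^2$, and bounding $m-1-\ell \le m$ removes the $t$-index cleanly. Next I would swap the remaining $\ell,j$ sums. For a fixed $j$, let $k$ denote the largest multiple of $I$ with $k\le j$, so $k\le j < k+I$. The condition $\tau(\ell)\le j$ with $\ell>j$ forces $\ell$ to lie in the same inter-averaging block as $j$, namely $j<\ell\le k+I-1$ (if $\ell\ge k+I$, then $\tau(\ell)\ge k+I>j$). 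Therefore, after the reindexing $r=\ell-1-j\in\{0,1,\dots,I-2\}$, the inner sum satisfies
\[
\sum_{\ell:\,\tau(\ell)\le j,\,j<\ell\le m-2}(1+\theta)^{\ell-1-j} \;\le\; \sum_{r=0}^{I-1}(1+\theta)^{r} \;=\; \frac{(1+\theta)^{I}-1}{\theta}.
\]

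The key estimate is then $(1+\theta)^{I} \le e^{I\theta}\le e$, which uses fact (F4) ($I\theta\le 1$). Hence the geometric sum is bounded by $(e-1)/\theta \le 2/\theta$. Putting everything together,
\[
\mathcal{S} \;\le\; m\cdot\frac{2}{\theta}\sum_{j=0}^{m-1}\mathbb{E}\big\|\bar{\mathbf{v}}_j^{s+1}\big\|^2.
\]
Finally, multiplying by the prefactor and using $(1+1/\delta)<2/\delta$ from (F3) together with $\theta>\delta$ from (F1) (which gives $1/\theta<1/\delta$), I obtain
\[
\frac{16\gamma^5 L^4}{NB}\Big(1+\frac{1}{\delta}\Big)\mathcal{S} \;\le\; \frac{16\gamma^5 L^4}{NB}\cdot\frac{2}{\delta}\cdot\frac{2m}{\delta}\sum_{j=0}^{m-1}\mathbb{E}\big\|\bar{\mathbf{v}}_j^{s+1}\big\|^2 \;=\; \frac{64\gamma^5 L^4 m}{NB\delta^2}\sum_{t=0}^{m-1}\mathbb{E}\big\|\bar{\mathbf{v}}_t^{s+1}\big\|^2,
\]
which is exactly the desired bound. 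The only delicate step is the careful bookkeeping for the range of $\ell$ at fixed $j$ under the definition of $\tau(\cdot)$; the rest is a geometric series estimate and the standard $(1+\theta)^{1/\theta}\le e$ trick enabled by the choice $I=\lfloor 1/\theta\rfloor$.
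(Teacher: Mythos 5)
Your proof is correct and follows essentially the same route as the paper's: both arguments reduce the triple sum to, for each fixed $j$, a geometric series over the at most $I$ indices $\ell$ lying in the same inter-averaging block (the paper does this by splitting the $\ell$-sum into blocks of length $I$ and bounding every coefficient by the largest one, you by explicitly swapping the order of summation), and both then invoke $(1+\theta)^I \le e$ from $I=\lfloor 1/\theta\rfloor$ together with (F1) and (F3) to absorb the constants into $4/\delta^2$. The bookkeeping for the range of $\ell$ at fixed $j$ and the final constant $64$ both check out.
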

\begin{proof}
From the left hand side of the inequality, we have
\begin{align} 
        & \frac{16 \gamma^5 L^4}{N B} \big( 1 + \mfrac{1}{\delta} \big) \sum_{t=0}^{m-1} \sum_{\ell=0}^{t-1} \sum_{j=\tau(\ell)}^{\ell-1} (1+\theta)^{\ell-1-j} \mathbb{E} \Big\| \bar{\mathbf{v}}_{j}^{s+1}  \Big\|^2 \nonumber \\
        & \ = \frac{16 \gamma^5 L^4}{N B} \big( 1 + \mfrac{1}{\delta} \big) \sum_{t=0}^{m-1} \left[ \sum_{\ell=0}^{I-1} \sum_{j=0}^{\ell-1} (1+\theta)^{\ell-1-j} \mathbb{E} \left\| \bar{\mathbf{v}}_{j}^{s+1}  \right\|^2 \right. \nonumber \\
        & \qquad \left. + \sum_{\ell=I}^{2I-1} \sum_{j=I}^{\ell-1} (1+\theta)^{\ell-1-j} \mathbb{E} \left\| \bar{\mathbf{v}}_{j}^{s+1}  \right\|^2 \hdots + \sum_{\ell=\tau(t)}^{t-1} \sum_{j=\tau(t)}^{\ell-1} (1+\theta)^{\ell-1-j} \mathbb{E} \left\| \bar{\mathbf{v}}_{j}^{s+1}  \right\|^2 \right] \label{eq_phim_V1_1} \\
        & \ = \frac{16 \gamma^5 L^4}{N B} \big( 1 + \mfrac{1}{\delta} \big) \sum_{t=0}^{m-1} \left[ \sum_{\ell=0}^{I-1} \sum_{j=0}^{\ell-1} (1+\theta)^{\ell-1-j} \mathbb{E} \left\| \bar{\mathbf{v}}_{j}^{s+1}  \right\|^2 \right. \nonumber \\
        & \qquad \left. + \sum_{\ell=0}^{I-1} \sum_{j=0}^{\ell-1} (1+\theta)^{\ell-1-j} \mathbb{E} \left\| \bar{\mathbf{v}}_{j+I}^{s+1}  \right\|^2 \hdots + \sum_{\ell=0}^{t-1-\tau(t)} \sum_{j=0}^{\ell-1} (1+\theta)^{\ell-1-j} \mathbb{E} \left\| \bar{\mathbf{v}}_{j+\tau(t)}^{s+1}  \right\|^2 \right] \label{eq_phim_V1_1b} \\
        & \ \leq \frac{16 \gamma^5 L^4}{N B} \big( 1 + \mfrac{1}{\delta} \big) \left[  \sum_{t=0}^{m-1} \sum_{\ell=0}^{I-1} (1+\theta)^{\ell-1} \right] \sum_{j=0}^{m-1} \mathbb{E} \left\| \bar{\mathbf{v}}_{j}^{s+1}  \right\|^2 \label{eq_phim_V1_2} \\
        & \ \leq \frac{16 \gamma^5 L^4}{N B} \big( 1 + \mfrac{1}{\delta} \big) \sum_{t=0}^{m-1} \mathbb{E} \left\| \bar{\mathbf{v}}_{t}^{s+1} \right\|^2 m \left[ \frac{(1+\theta)^{I} - 1}{\theta} \right] \nonumber \\
        & \ \leq \frac{16 \gamma^5 L^4 m}{N B} \frac{2}{\delta} \frac{e - 1}{\theta} \sum_{t=0}^{m-1} \mathbb{E} \left\| \bar{\mathbf{v}}_{t}^{s+1} \right\|^2 \nonumber \\
        & \ \overset{(c)}{\leq} \frac{64 \gamma^5 L^4 m}{N B \delta^2} \sum_{t=0}^{m-1} \mathbb{E} \left\| \bar{\mathbf{v}}_{t}^{s+1} \right\|^2 \label{eq_phim_V1}
    \end{align}
    where, \eqref{eq_phim_V1_1b} follows from \eqref{eq_phim_V1_1} by simple re-indexing. \eqref{eq_phim_V1_2} follows from \eqref{eq_phim_V1_1b} by upper bounding the coefficients of all terms with the one corresponding to $j=0$. Note that $(1+\frac{1}{x})^x$ in increasing function for $x>0$ and $\lim_{x \to \infty} (1+\frac{1}{x})^x = e$ (Euler's constant). \eqref{eq_phim_V1} follows from (F1)-(F3).
    \end{proof}
    Next, we present the second lemma we will use to prove Lemma \ref{lemma_finite_descent}.}
    
    \pk{\begin{lemma}
    \label{lem_finite_descent2}
    We have:
     \begin{align*}
     \frac{2 \gamma^3 L^2}{N B} \sum_{t=0}^{m-1} \sum_{\ell = 0}^{t-1} \mathbb{E} \left\| \bar{\mathbf{v}}_{\ell}^{s+1}  \right\|^2  \leq \frac{2 \gamma^3 L^2 m}{N B} \sum_{t=0}^{m-1} \mathbb{E} \left\| \bar{\mathbf{v}}_{t}^{s+1} \right\|^2.  
    \end{align*}
    \end{lemma}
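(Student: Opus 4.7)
The claim is a purely combinatorial bound on a double sum of nonnegative quantities, with the constant prefactor $\frac{2\gamma^3 L^2}{NB}$ playing no role beyond being carried through. My plan is therefore to set $a_\ell \triangleq \mathbb{E}\|\bar{\mathbf{v}}_\ell^{s+1}\|^2 \geq 0$ and prove the inequality
\begin{align*}
\sum_{t=0}^{m-1} \sum_{\ell=0}^{t-1} a_\ell \; \leq \; m \sum_{t=0}^{m-1} a_t,
\end{align*}
after which multiplying both sides by $\frac{2\gamma^3 L^2}{NB}$ immediately yields the stated lemma.

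To prove the inequality, I will simply observe that for every fixed $t \in \{0,\ldots,m-1\}$ the inner sum is a truncation of the full sum, i.e.\
\begin{align*}
\sum_{\ell=0}^{t-1} a_\ell \; \leq \; \sum_{\ell=0}^{m-1} a_\ell,
\end{align*}
since every $a_\ell$ is nonnegative. Summing this bound over the $m$ values of $t$ produces $m$ identical copies of the full sum on the right-hand side, giving the desired estimate. Equivalently (and this is the form I would probably record in the write-up), one may swap the order of summation to obtain $\sum_{t=0}^{m-1}\sum_{\ell=0}^{t-1} a_\ell = \sum_{\ell=0}^{m-2} (m-1-\ell)\, a_\ell$, and then use $m-1-\ell \leq m$ for all $\ell \geq 0$.

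There is no real obstacle: the nonnegativity of $\|\bar{\mathbf{v}}_\ell^{s+1}\|^2$ is what makes either truncation or the Fubini rewrite legitimate without any cancellation concerns. The only thing worth flagging is that the bound is loose (replacing $m-1-\ell$ by $m$), but this looseness is harmless because in Lemma~\ref{lemma_finite_descent} we only need an estimate of this order, and the factor $m$ is absorbed into the coefficient $\frac{2\gamma^3 L^2 m}{NB}$ that appears in the descent inequality. The entire proof therefore reduces to one short display, in stark contrast to the preceding Lemma~\ref{lem_finite_descent1}, where re-indexing across the averaging epochs of length $I$ and summing the geometric factor $(1+\theta)^{\ell-1-j}$ was the main work.
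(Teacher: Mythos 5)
Your proposal is correct and matches the paper's own argument: the paper likewise swaps the order of summation to get $\sum_{\ell=0}^{m-2}(m-1-\ell)\,\mathbb{E}\|\bar{\mathbf{v}}_{\ell}^{s+1}\|^2$ and then bounds each coefficient $m-1-\ell$ by $m$, relying on nonnegativity of the summands. Your alternative truncation phrasing is an equivalent one-liner, so there is nothing to add.
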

    \begin{proof} We have from the left hand side of the inequality
    \begin{align*}
        & \frac{2 \gamma^3 L^2}{N B} \sum_{t=0}^{m-1} \sum_{\ell = 0}^{t-1} \mathbb{E} \left\| \bar{\mathbf{v}}_{\ell}^{s+1}  \right\|^2 \nonumber \\
        & \ = \frac{2 \gamma^3 L^2}{N B} \left[ \mathbb{E} \left\| \bar{\mathbf{v}}_{0}^{s+1}  \right\|^2 (m-1) + \mathbb{E} \left\| \bar{\mathbf{v}}_{1}^{s+1} \right\|^2 (m-2) + \hdots + \mathbb{E} \left\| \bar{\mathbf{v}}_{m-2}^{s+1} \right\|^2 (1) \right] \nonumber \\
        & \ \leq \frac{2 \gamma^3 L^2 m}{N B} \sum_{t=0}^{m-1} \mathbb{E} \left\| \bar{\mathbf{v}}_{t}^{s+1} \right\|^2.  
    \end{align*}
    \end{proof}
    Finally, we present the third intermediate lemma before presenting the proof of Lemma \ref{lemma_finite_descent}.}
    
    \pk{\begin{lemma}
    \label{lem_finite_descent3}
    We have:
    $$ 8 \gamma^5 L^4 \left( 1 + \mfrac{1}{\alpha} \right) \big( 1 + \mfrac{1}{\delta} \big) \sum_{t=0}^{m-1} \sum_{\ell=\tau(t)+1}^{t-1} \sum_{j=\tau(\ell)}^{\ell-1} (1+\alpha)^{t-1-\ell} (1+\theta)^{\ell-1-j} \mathbb{E} \left\| \bar{\mathbf{v}}_{j}^{s+1}  \right\|^2 \leq \frac{256 \gamma^5 L^4}{\delta^4} \sum_{t=0}^{m-1} \mathbb{E} \left\| \bar{\mathbf{v}}_{t}^{s+1} \right\|^2.$$
    \end{lemma}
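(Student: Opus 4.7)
The plan is to follow the structural template of Lemma \ref{lem_finite_descent1}, bounding the triple sum by a constant multiple of $\sum_{t=0}^{m-1}\mathbb{E}\|\bar{\mathbf{v}}_t^{s+1}\|^2$, but now with the added complication of two geometric factors $(1+\alpha)^{t-1-\ell}$ and $(1+\theta)^{\ell-1-j}$ combined with a restricted outer range $\ell\in[\tau(t)+1,t-1]$. The first key observation is that this restricted range forces $\ell$ to lie in the same averaging block as $t$, so $\tau(\ell)=\tau(t)$ for every admissible $\ell$; hence the inner lower limit simplifies to $j\ge\tau(t)$, placing all three indices $t,\ell,j$ within a single epoch.

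Next I would merge the two geometric factors. Using $\alpha<\theta$ from facts (F1)--(F2), we have $(1+\alpha)^{t-1-\ell}(1+\theta)^{\ell-1-j}\le(1+\theta)^{t-2-j}$, which eliminates the $\ell$-dependence from the exponent. The $\ell$-sum then collapses into a mere count of length at most $I-1$, and combining with the universal bound $(1+\theta)^{I-2}\le(1+\theta)^{1/\theta}\le e$ from fact (F4) reduces the triple sum to at most $eI\sum_{t}\sum_{j=\tau(t)}^{t-2}\mathbb{E}\|\bar{\mathbf{v}}_j^{s+1}\|^2$.

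Then I would swap the order of summation in $t$ and $j$ and perform a counting argument. For each fixed $j$ in a given epoch, the constraints $\tau(t)\le j$ and $t\ge j+2$ force $t$ into the same epoch as $j$ with $t\ge j+2$, yielding at most $I-2$ admissible values of $t$. This contributes another factor bounded by $I$, so the triple sum is at most $eI^2\sum_j\mathbb{E}\|\bar{\mathbf{v}}_j^{s+1}\|^2$. Finally, $I^2\le 1/\theta^2\le 1/\delta^2$ from (F4) (since $\theta>\delta$) together with $(1+1/\alpha)(1+1/\delta)\le 8/\delta^2$ from (F2)--(F3) (using $\alpha=\delta/2$) give the prefactor $8\gamma^5 L^4\cdot(8/\delta^2)\cdot(e/\delta^2)=64e\gamma^5 L^4/\delta^4$, which is strictly smaller than $256\gamma^5 L^4/\delta^4$ since $64e<174$.

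The main obstacle I anticipate is the epoch-based bookkeeping when swapping the order of summation: one must verify carefully that the constraints $j\in[\tau(t),t-2]$ really do restrict $t$ and $j$ to the same averaging block, and that the count of valid $t$'s for each fixed $j$ is at most $I$. This is entirely analogous to the re-indexing step used in Lemma \ref{lem_finite_descent1}, but here it must be executed while simultaneously tracking the two distinct geometric rates $\alpha$ and $\theta$ and the outer lower-limit shift $\tau(t)+1$ instead of $0$.
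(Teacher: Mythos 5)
Your proof is correct and reaches the stated constant with room to spare, but it executes the key estimate differently from the paper. The paper also decomposes the outer sum into averaging blocks of length $I$ (so that $\tau(\ell)=\tau(t)$ inside each block, exactly as you observe), but it then keeps the two distinct rates and evaluates the inner double geometric sum in closed form, $\sum_{\ell}(1+\alpha)^{t-1-\ell}(1+\theta)^{\ell-1}=\tfrac{(1+\alpha)^{t-1}}{1+\theta}\cdot\tfrac{((1+\theta)/(1+\alpha))^{t}-1}{((1+\theta)/(1+\alpha))-1}\leq\tfrac{(1+\theta)^{t}-(1+\alpha)^{t}}{\theta-\alpha}$, which is why fact (F2) supplies the lower bound $\theta-\alpha>\delta/2$; summing over $t$ then yields $\tfrac{(1+\theta)^{I}-1}{\theta(\theta-\alpha)}\leq\tfrac{2(e-1)}{\delta^{2}}$ and the final constant $128(e-1)/\delta^{4}\leq 256/\delta^{4}$. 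You instead coarsen $(1+\alpha)^{t-1-\ell}\leq(1+\theta)^{t-1-\ell}$ to merge the rates, pay a factor of $I$ for the now-trivial $\ell$-sum and another factor of $I$ from the $t$-versus-$j$ counting, and absorb $(1+\theta)^{t-2-j}\leq(1+\theta)^{I}\leq e$ termwise, landing at $64e/\delta^{4}$. Your route is more elementary (no closed-form two-rate geometric series, no use of $\theta-\alpha>\delta/2$) and happens to give a marginally smaller constant; the paper's route is the one that generalizes if one ever needs the sharper dependence on the gap $\theta-\alpha$. The only cosmetic issue is that you say ``epoch'' where you mean the averaging block of length $I$ within an epoch; the counting itself (at most $I$ admissible $\ell$ for fixed $t,j$, and at most $I$ admissible $t$ for fixed $j$, all confined to one block) is right.
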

    \begin{proof}
    Suppose $\tau(m-1) = (p-1)I$ (see \eqref{eq_tau}), we have
    \begin{align}
        & 8 \gamma^5 L^4 \left( 1 + \mfrac{1}{\alpha} \right) \big( 1 + \mfrac{1}{\delta} \big) \sum_{t=0}^{m-1} \sum_{\ell=\tau(t)+1}^{t-1} \sum_{j=\tau(\ell)}^{\ell-1} (1+\alpha)^{t-1-\ell} (1+\theta)^{\ell-1-j} \mathbb{E} \left\| \bar{\mathbf{v}}_{j}^{s+1}  \right\|^2 \nonumber \\
        & \ = 8 \gamma^5 L^4 \left( 1 + \mfrac{1}{\alpha} \right) \left( 1 + \mfrac{1}{\delta} \right) \left[ \sum_{t=0}^{I-1} \sum_{\ell=1}^{t-1} \sum_{j=0}^{\ell-1} (1+\alpha)^{t-1-\ell} (1+\theta)^{\ell-1-j} \mathbb{E} \left\| \bar{\mathbf{v}}_{j}^{s+1}  \right\|^2 \right. \nonumber \\
        & \qquad \qquad \left. + \sum_{t=I}^{2I-1} \sum_{\ell=I+1}^{t-1} \sum_{j=I}^{\ell-1} (1+\alpha)^{t-1-\ell} (1+\theta)^{\ell-1-j} \mathbb{E} \left\| \bar{\mathbf{v}}_{j}^{s+1}  \right\|^2 \hdots \right. \nonumber \\
        & \qquad \qquad \left. + \sum_{t=(p-1)I}^{m-1} \sum_{\ell=(p-1)I+1}^{t-1} \sum_{j=(p-1)I}^{\ell-1} (1+\alpha)^{t-1-\ell} (1+\theta)^{\ell-1-j} \mathbb{E} \left\| \bar{\mathbf{v}}_{j}^{s+1}  \right\|^2 \right] \label{eq_phim_V3_1} \\
        & \ = 8 \gamma^5 L^4 \left( 1 + \mfrac{1}{\alpha} \right) \left( 1 + \mfrac{1}{\delta} \right) \left[ \sum_{t=0}^{I-1} \sum_{\ell=1}^{t-1} \sum_{j=0}^{\ell-1} (1+\alpha)^{t-1-\ell} (1+\theta)^{\ell-1-j} \mathbb{E} \left\| \bar{\mathbf{v}}_{j}^{s+1}  \right\|^2 \right. \nonumber \\
        & \qquad \qquad \left. + \sum_{t=0}^{I-1} \sum_{\ell=1}^{t-1} \sum_{j=0}^{\ell-1} (1+\alpha)^{t-1-\ell} (1+\theta)^{\ell-1-j} \mathbb{E} \left\| \bar{\mathbf{v}}_{j+I}^{s+1}  \right\|^2 \hdots \right. \nonumber \\
        & \qquad \qquad \left. + \sum_{t=0}^{m-1-(p-1)I} \sum_{\ell=1}^{t-1} \sum_{j=0}^{\ell-1} (1+\alpha)^{t-1-\ell} (1+\theta)^{\ell-1-j} \mathbb{E} \left\| \bar{\mathbf{v}}_{j+(p-1)I}^{s+1}  \right\|^2 \right] \nonumber \\
        & \ \leq 8 \gamma^5 L^4 \frac{2}{\alpha} \frac{2}{\delta} \sum_{j=0}^{m-1} \mathbb{E} \left\| \bar{\mathbf{v}}_{j}^{s+1} \right\|^2 \left[ \sum_{t=0}^{I-1} \sum_{\ell=0}^{t-1} (1+\alpha)^{t-1-\ell} (1 + \theta)^{\ell-1} \right] \label{eq_phim_V3_2} \\
        & \ = \frac{32 \gamma^5 L^4}{\alpha \delta} \sum_{j=0}^{m-1} \mathbb{E} \left\| \bar{\mathbf{v}}_{j}^{s+1} \right\|^2 \left[ \sum_{t=0}^{I-1} \frac{(1+\alpha)^{t-1}}{(1+\theta)} \frac{\left( \frac{1+\theta}{1+\alpha} \right)^t - 1}{\left( \frac{1+\theta}{1+\alpha} \right) - 1} \right] \nonumber \\
        & \ \leq \frac{32 \gamma^5 L^4}{\alpha \delta} \sum_{j=0}^{m-1} \mathbb{E} \left\| \bar{\mathbf{v}}_{j}^{s+1} \right\|^2 \left[ \sum_{t=0}^{I-1}  \frac{(1+\theta)^{t} - (1+\alpha)^{t}}{\theta - \alpha} \right] \nonumber \\
        & \ \leq \frac{32 \gamma^5 L^4}{\alpha \delta} \sum_{t=0}^{m-1} \mathbb{E} \left\| \bar{\mathbf{v}}_{t}^{s+1} \right\|^2 \left[ \frac{(1+\theta)^{I} - 1}{\theta (\theta - \alpha)} \right] \nonumber \\
        & \ \leq \frac{32 \gamma^5 L^4}{(\delta/2) \delta} \sum_{t=0}^{m-1} \mathbb{E} \left\| \bar{\mathbf{v}}_{t}^{s+1} \right\|^2 \left[ \frac{e-1}{\delta (\delta/2)} \right] \label{eq_phim_V3_3} \\
        & \ = \frac{256 \gamma^5 L^4}{\delta^4} \sum_{t=0}^{m-1} \mathbb{E} \left\| \bar{\mathbf{v}}_{t}^{s+1} \right\|^2. \label{eq_phim_V3}
    \end{align}
    In \eqref{eq_phim_V3_1}, we split the summation over $t$ into blocks of length $I$. For any block $\sum_{t = (c)I}^{(c+1)I-1}$, $\tau(t) = cI$. Therefore, $\ell$ varies from $\ell = [cI+1:t-1]$. Further, over this range of $\ell$, $\tau(\ell) = cI$. Note that in this block, the largest coefficient corresponds to the smallest $j$, i.e., $\mathbb{E} \| \bar{\mathbf{v}}_{cI}^{s+1} \|^2$. We use these upper bounds on coefficients in \eqref{eq_phim_V3_2}. We also use (F3). \eqref{eq_phim_V3_3} follows from (F1), (F2), (F4).   
    \end{proof}
Now, we are ready to prove Lemma \ref{lemma_finite_descent}. In the process, we will utilize the three lemmas derived above.}
\subsubsection{Proof of Lemma \ref{lemma_finite_descent}}    
\begin{proof}{[Lemma \ref{lemma_finite_descent}]}
Substituting the upper bound \eqref{eq_upbd_gradf_avgv_5} in \eqref{eq_upbd_f_avg_Lipschitz} we get
\begin{align}
	& \mathbb{E} f \left( \bar{\mathbf{x}}_{t+1}^{s+1} \right) \nonumber \\
	& \leq \mathbb{E} f \left( \bar{\mathbf{x}}_{t}^{s+1} \right) - \frac{\gamma}{2} \mathbb{E} \big\| \nabla f \left( \bar{\mathbf{x}}_{t}^{s+1} \right) \big\|^2 - \frac{\gamma}{2} (1-L \gamma) \mathbb{E} \big\| \bar{\mathbf{v}}_{t}^{s+1} \big\|^2 \nonumber \\
	& \quad + \frac{\gamma}{2} \frac{4 \gamma^2 L^2}{N^2 B} \sum_{\ell=0}^{t-1} 8 \gamma^2 N L^2 \big( 1 + \mfrac{1}{\delta} \big) \sum_{j=\tau(\ell)}^{\ell-1} (1+\theta)^{\ell-1-j} \mathbb{E} \Big\| \bar{\mathbf{v}}_{j}^{s+1}  \Big\|^2 + \frac{\gamma}{2} \frac{4 \gamma^2 L^2}{N^2 B} N \sum_{\ell = 0}^{t-1} \mathbb{E} \left\| \bar{\mathbf{v}}_{\ell}^{s+1}  \right\|^2 \nonumber \\
	& \quad + \frac{\gamma}{2} 2 E_0^{s+1} + \frac{\gamma}{2} \frac{2 L^2}{N} \left( 1 + \mfrac{1}{\alpha} \right) \gamma^2 \sum_{\ell=\tau(t)+1}^{t-1} (1+\alpha)^{t-1-\ell} 8 \gamma^2 N L^2 \big( 1 + \mfrac{1}{\delta} \big) \sum_{j=\tau(\ell)}^{\ell-1} (1+\theta)^{\ell-1-j} \mathbb{E} \Big\| \bar{\mathbf{v}}_{j}^{s+1} \Big\|^2 \nonumber \\
	&= \mathbb{E} f \left( \bar{\mathbf{x}}_{t}^{s+1} \right) - \frac{\gamma}{2} \mathbb{E} \big\| \nabla f \left( \bar{\mathbf{x}}_{t}^{s+1} \right) \big\|^2 - \frac{\gamma}{2} (1-L \gamma) \mathbb{E} \big\| \bar{\mathbf{v}}_{t}^{s+1} \big\|^2 \nonumber \\
	& \quad + \frac{16 \gamma^5 L^4}{N B} \big( 1 + \mfrac{1}{\delta} \big) \sum_{\ell=0}^{t-1} \sum_{j=\tau(\ell)}^{\ell-1} (1+\theta)^{\ell-1-j} \mathbb{E} \Big\| \bar{\mathbf{v}}_{j}^{s+1}  \Big\|^2 + \frac{2 \gamma^3 L^2}{N B} \sum_{\ell = 0}^{t-1} \mathbb{E} \left\| \bar{\mathbf{v}}_{\ell}^{s+1}  \right\|^2 \nonumber \\
	& \quad + \gamma E_0^{s+1} + 8 \gamma^5 L^4 \left( 1 + \mfrac{1}{\alpha} \right) \big( 1 + \mfrac{1}{\delta} \big) \sum_{\ell=\tau(t)+1}^{t-1} (1+\alpha)^{t-1-\ell} \sum_{j=\tau(\ell)}^{\ell-1} (1+\theta)^{\ell-1-j} \mathbb{E} \Big\| \bar{\mathbf{v}}_{j}^{s+1}  \Big\|^2 \label{eq_upbd_Phi_t}
\end{align}
Note that, as discussed earlier, for finite sum problems, $E_0^{s+1} \triangleq \mathbb{E} \left\| \bar{\mathbf{v}}_{0}^{s+1} - \mfrac{1}{N} \sum\nolimits_{i=1}^N \nabla f_i \left( \mathbf{x}_{i,0}^{s+1} \right) \right\|^2 = 0$. Further, summing \eqref{eq_upbd_Phi_t} over $t=0,\hdots,m-1$, we get
\begin{align}
    \mathbb{E} f \left( \bar{\mathbf{x}}_{m}^{s+1} \right) & \leq \mathbb{E} f \left( \bar{\mathbf{x}}_{0}^{s+1} \right) - \frac{\gamma}{2} \sum_{t=0}^{m-1} \mathbb{E} \big\| \nabla f \left( \bar{\mathbf{x}}_{t}^{s+1} \right) \big\|^2 - \frac{\gamma}{2} (1-L \gamma) \sum_{t=0}^{m-1} \mathbb{E} \big\| \bar{\mathbf{v}}_{t}^{s+1} \big\|^2 \nonumber \\
    & + \frac{16 \gamma^5 L^4}{N B} \big( 1 + \mfrac{1}{\delta} \big) \sum_{t=0}^{m-1} \sum_{\ell=0}^{t-1} \sum_{j=\tau(\ell)}^{\ell-1} (1+\theta)^{\ell-1-j} \mathbb{E} \Big\| \bar{\mathbf{v}}_{j}^{s+1}  \Big\|^2 + \frac{2 \gamma^3 L^2}{N B} \sum_{t=0}^{m-1} \sum_{\ell = 0}^{t-1} \mathbb{E} \left\| \bar{\mathbf{v}}_{\ell}^{s+1}  \right\|^2 \nonumber \\
	& + 8 \gamma^5 L^4 \left( 1 + \mfrac{1}{\alpha} \right) \big( 1 + \mfrac{1}{\delta} \big) \sum_{t=0}^{m-1} \sum_{\ell=\tau(t)+1}^{t-1} \sum_{j=\tau(\ell)}^{\ell-1} (1+\alpha)^{t-1-\ell} (1+\theta)^{\ell-1-j} \mathbb{E} \left\| \bar{\mathbf{v}}_{j}^{s+1} \right\|^2 \label{eq_upbd_Phi_sum_t}
\end{align}
\pk{Substituting the upper bounds derived in Lemmas \ref{lem_finite_descent1}, \ref{lem_finite_descent2} and \ref{lem_finite_descent3}, we get the result of Lemma \ref{lemma_finite_descent}.}
\end{proof}

\subsection{Choice of $\gamma$}
\label{app_gamma_choice}
Suppose $\gamma$ be selected such that
\begin{align}
    L \gamma \leq \frac{1}{8} \ \wedge \ \frac{128 \gamma^{4} L^4 m}{N B \delta^2} \leq \frac{1}{8} \ \wedge \ \frac{4 \gamma^{2} L^2 m}{N B} \leq \frac{1}{8} \ \wedge \ \frac{512 \gamma^{4} L^4}{\delta^4} \leq \frac{1}{8} \label{eq_gamma_choose}
\end{align}
As we shall see in Section \ref{subsec_samp_comp}, the optimal choices $m = I \sqrt{N n}, B = \frac{1}{I} \sqrt{\frac{n}{N}}$. Substituting these values in one of the inequalities above, we get
\begin{align}
    \frac{128 \gamma^{4} L^4 I \sqrt{N n}}{N \frac{1}{I} \sqrt{\frac{n}{N}}} \leq \frac{\delta^2}{8} \quad & \Rightarrow \quad 128 \gamma^{4} L^4 I^2 \leq \frac{1}{8 I^2} \qquad \because \delta < \frac{1}{I} \text{ (F4)} \nonumber \\
    & \Rightarrow \quad \gamma \leq \frac{1}{4 \sqrt{2} L I}.
\end{align}
Repeating a similar reasoning for all the inequalities in \eqref{eq_gamma_choose}, we get
\begin{align}
    \label{eq_upbd_stepsize}
    \gamma \leq \min \left\{ \frac{1}{8L}, \frac{1}{4 \sqrt{2} L I}, \frac{1}{8 L I} \right\} \quad \Rightarrow \quad \gamma \leq \frac{1}{8 L I}.
\end{align}
Therefore, we can choose a constant step size $\gamma$, small enough (and independent of $N, n$), such that \eqref{eq_coeff_avg_v} holds.


\subsection{Proof of Theorem \ref{thm_conv_finite_sum}}
\label{app_thm_conv_finite_sum}
\begin{proof}
We have
\begin{align}
    \min_{s,t} \left[ \mathbb{E} \left\| \nabla f \left( \bar{\mathbf{x}}_{t}^{s+1} \right) \right\|^2 + \frac{1}{N} \sum_{i=1}^N \mathbb{E} \left\| \mathbf{x}_{i,t}^{s+1} - \bar{\mathbf{x}}_{t}^{s+1} \right\|^2 \right]  \leq \frac{1}{T} \sum_{s=0}^{S-1} \sum_{t=0}^{m-1} \left[ \mathbb{E} \left\| \nabla f \left( \bar{\mathbf{x}}_{t}^{s+1} \right) \right\|^2 + \frac{1}{N} \sum_{i=1}^N \mathbb{E} \left\| \mathbf{x}_{i,t}^{s+1} - \bar{\mathbf{x}}_{t}^{s+1} \right\|^2 \right] \label{eq_final_bd_1} 
    \end{align}
    \pk{Upper bounding the second term on the right hand side, we have}
    \begin{align}
    & \frac{1}{T} \sum_{s=0}^{S-1} \sum_{t=0}^{m-1} \frac{1}{N} \sum_{i=1}^N \mathbb{E} \big\| \mathbf{x}_{i,t}^{s+1} - \bar{\mathbf{x}}_{t}^{s+1} \big\|^2 \nonumber \\
    & \quad \leq \frac{1}{T} \sum_{s=0}^{S-1} \sum_{t=0}^{m-1} \frac{1}{N} 8 \gamma^4 N L^2 \big( 1 + \mfrac{1}{\delta} \big) \left( 1 + \mfrac{1}{\alpha} \right) \sum_{\ell=\tau(t)+1}^{t-1} (1+\alpha)^{t-1-\ell} \sum_{j=\tau(\ell)}^{\ell-1} (1+\theta)^{\ell-1-j} \mathbb{E} \left\| \bar{\mathbf{v}}_{j}^{s+1}  \right\|^2 \label{eq_total_avg_x_avgx_1} \\
    & \quad \leq \frac{256 \gamma^4 L^4}{\delta^4} \frac{1}{T} \sum_{s=0}^{S-1} \sum_{t=0}^{m-1} \mathbb{E} \left\| \bar{\mathbf{v}}_{t}^{s+1} \right\|^2 \label{eq_total_avg_x_avgx_2} \\
    & \quad \leq\frac{1}{2} \frac{1}{T} \sum_{s=0}^{S-1}  \sum_{t=0}^{m-1} \mathbb{E} \left\| \bar{\mathbf{v}}_{t}^{s+1} \right\|^2 \label{eq_total_avg_x_avgx_3}
\end{align}
where, \eqref{eq_total_avg_x_avgx_1} follows from Lemma \ref{lemma_finite_nw_error}; \eqref{eq_total_avg_x_avgx_2} follows from the bound in \eqref{eq_phim_V3}; \eqref{eq_total_avg_x_avgx_3} follows from \eqref{eq_coeff_avg_v}. 

\pk{Replacing \eqref{eq_total_avg_x_avgx_3} in \eqref{eq_final_bd_1}, we get}
    \begin{align}
    &  \min_{s,t} \left[ \mathbb{E} \left\| \nabla f \left( \bar{\mathbf{x}}_{t}^{s+1} \right) \right\|^2 + \frac{1}{N} \sum_{i=1}^N \mathbb{E} \left\| \mathbf{x}_{i,t}^{s+1} - \bar{\mathbf{x}}_{t}^{s+1} \right\|^2 \right] \nonumber\\
    & \quad \leq \frac{1}{T} \sum_{s=0}^{S-1} \sum_{t=0}^{m-1} \left[ \mathbb{E} \left\| \nabla f \left( \bar{\mathbf{x}}_{t}^{s+1} \right) \right\|^2 + \frac{1}{2} \mathbb{E} \left\| \bar{\mathbf{v}}_{t}^{s+1} \right\|^2 \right] \nonumber \\
    & \quad  \leq \frac{1}{T} \sum_{s=0}^{S-1} \sum_{t=0}^{m-1} \left[ \mathbb{E} \left\| \nabla f \left( \bar{\mathbf{x}}_{t}^{s+1} \right) \right\|^2 + \left( 1 - L \gamma - \left[ \frac{128 \gamma^{4} L^4 m}{N B \delta^2} + \frac{4 \gamma^{2} L^2 m}{N B} + \frac{512 \gamma^{4} L^4}{\delta^4} \right] \right) \mathbb{E} \left\| \bar{\mathbf{v}}_{t}^{s+1} \right\|^2 \right] \label{eq_final_bd_2} \\
    & \quad   \leq \frac{2 \left( f(\mathbf{x}^0) - f_{\ast} \right)}{T \gamma}. \label{eq_final_bd_3}
\end{align}
Here, \eqref{eq_final_bd_2} follows from \eqref{eq_coeff_avg_v}; \eqref{eq_final_bd_3} follows from \eqref{eq_favg_over_all_epcohs}.
\end{proof}

\section{Online case}
\subsection{Proof of Lemma \ref{lemma_online_bd_var}}
\label{app_lemma_online_bd_var}
\begin{proof} We have from the definition of $E_0^{s + 1}$
\begin{align}
    E_0^{s+1} &= \mathbb{E} \Big\| \bar{\mathbf{v}}_{0}^{s+1} - \mfrac{1}{N} \sum_{i=1}^N \nabla f_i \left( \mathbf{x}_{i,0}^{s+1} \right) \Big\|^2 \nonumber \\
    &= \mathbb{E} \Big\| \mfrac{1}{N} \sum_{i=1}^N \mfrac{1}{n_b} \sum_{\xi_i} \nabla f_i \left( \mathbf{x}_{i,0}^{s+1}; \xi_i \right) - \mfrac{1}{N} \sum_{i=1}^N \nabla f_i \left( \mathbf{x}_{i,0}^{s+1} \right) \Big\|^2 \tag*{(steps \ref{alg2_v_init}, \ref{alg2_v_avg_comp}-\ref{alg2_v_avg_assign} in Algorithm \ref{alg2})} \nonumber \\
    &= \mfrac{1}{N^2} \sum_{i=1}^N \mathbb{E} \Big\| \mfrac{1}{n_b} \sum_{\xi_i} \left\{ \nabla f_i \left( \mathbf{x}_{i,0}^{s+1}; \xi_i \right) - \nabla f_i \left( \mathbf{x}_{i,0}^{s+1} \right) \right\} \Big\|^2 \label{eq_upbd_E0_1} \\
    &= \mfrac{1}{N^2} \sum_{i=1}^N \mfrac{1}{n_b^2} \mathbb{E} \sum_{\xi_i} \Big\| \nabla f_i \left( \mathbf{x}_{i,0}^{s+1}; \xi_i \right) - \nabla f_i \left( \mathbf{x}_{i,0}^{s+1} \right) \Big\|^2 \label{eq_upbd_E0_2} \\
    & \leq \frac{\sigma^2}{N n_b}. \label{eq_upbd_E0_3}
\end{align}
\eqref{eq_upbd_E0_1} follows since for $i \neq j$
\begin{align*}
    & \mathbb{E} \left\langle \sum_{\xi_i} \left\{ \nabla f_i \left( \mathbf{x}_{i,0}^{s+1}; \xi_i \right) - \nabla f_i \left( \mathbf{x}_{i,0}^{s+1} \right) \right\}, \sum_{\xi_j} \left\{ \nabla f_j \left( \mathbf{x}_{j,0}^{s+1}; \xi_j \right) - \nabla f_j \left( \mathbf{x}_{j,0}^{s+1} \right) \right\} \right\rangle \\
    & = \mathbb{E} \left\langle \mathbb{E} \sum_{\xi_i} \left\{ \nabla f_i \left( \mathbf{x}_{i,0}^{s+1}; \xi_i \right) - \nabla f_i \left( \mathbf{x}_{i,0}^{s+1} \right) \right\}, \mathbb{E} \sum_{\xi_j} \left\{ \nabla f_j \left( \mathbf{x}_{j,0}^{s+1}; \xi_j \right) - \nabla f_j \left( \mathbf{x}_{j,0}^{s+1} \right) \right\} \right\rangle = 0.
\end{align*}
This is because, given $\mathbf{x}_{i,0}^{s+1} = \bar{\mathbf{x}}_m^{s}$, the samples at each node are picked uniformly randomly, and independent of other nodes. \eqref{eq_upbd_E0_2} follows since samples at any node are also picked independent of each other. Therefore, for any two distinct samples $\xi_i \neq \zeta_i$,
\begin{align*}
    & \mathbb{E} \left\langle \nabla f_i \left( \mathbf{x}_{i,0}^{s+1}; \xi_i \right) - \nabla f_i \left( \mathbf{x}_{i,0}^{s+1} \right), \nabla f_i \left( \mathbf{x}_{i,0}^{s+1}; \zeta_i \right) - \nabla f_i \left( \mathbf{x}_{i,0}^{s+1} \right) \right\rangle \\
    & = \mathbb{E} \left\langle \mathbb{E}_{\xi_i} \nabla f_i \left( \mathbf{x}_{i,0}^{s+1}; \xi_i \right) - \nabla f_i \left( \mathbf{x}_{i,0}^{s+1} \right), \mathbb{E}_{\zeta_i} \nabla f_i \left( \mathbf{x}_{i,0}^{s+1}; \zeta_i \right) - \nabla f_i \left( \mathbf{x}_{i,0}^{s+1} \right) \right\rangle = 0.
\end{align*}
Finally, \eqref{eq_upbd_E0_3} follows from Assumption (A3).
\end{proof}

\subsection{Proof of Lemma \ref{lemma_online_descent}}
\label{app_lemma_online_descent}
\begin{proof}
Substituting \eqref{eq_upbd_E0_3} in \eqref{eq_upbd_Phi_t} and summing \eqref{eq_upbd_Phi_t} over $t=0,\hdots,m-1$, we get
\begin{align}
    \mathbb{E} f \left( \bar{\mathbf{x}}_{m}^{s+1} \right) & \leq \mathbb{E} f \left( \bar{\mathbf{x}}_{0}^{s+1} \right) - \frac{\gamma}{2} \sum_{t=0}^{m-1} \mathbb{E} \big\| \nabla f \left( \bar{\mathbf{x}}_{t}^{s+1} \right) \big\|^2 - \frac{\gamma}{2} (1-L \gamma) \sum_{t=0}^{m-1} \mathbb{E} \big\| \bar{\mathbf{v}}_{t}^{s+1} \big\|^2 \nonumber \\
    & + \frac{16 \gamma^5 L^4}{N B} \big( 1 + \mfrac{1}{\delta} \big) \sum_{t=0}^{m-1} \sum_{\ell=0}^{t-1} \sum_{j=\tau(\ell)}^{\ell-1} (1+\theta)^{\ell-1-j} \mathbb{E} \Big\| \bar{\mathbf{v}}_{j}^{s+1}  \Big\|^2 + \frac{2 \gamma^3 L^2}{N B} \sum_{t=0}^{m-1} \sum_{\ell = 0}^{t-1} \mathbb{E} \left\| \bar{\mathbf{v}}_{\ell}^{s+1}  \right\|^2 \nonumber \\
	& + 8 \gamma^5 L^4 \left( 1 + \mfrac{1}{\alpha} \right) \big( 1 + \mfrac{1}{\delta} \big) \sum_{t=0}^{m-1} \sum_{\ell=\tau(t)+1}^{t-1} \sum_{j=\tau(\ell)}^{\ell-1} (1+\alpha)^{t-1-\ell} (1+\theta)^{\ell-1-j} \mathbb{E} \left\| \bar{\mathbf{v}}_{j}^{s+1} \right\|^2 \nonumber \\
	& + \frac{\gamma \sigma^2 m}{N n_b} \label{eq_upbd_Phi_online_sum_t}
\end{align}
\eqref{eq_upbd_Phi_online_sum_t} is the same as \eqref{eq_upbd_Phi_sum_t} except the additional last term in \eqref{eq_upbd_Phi_online_sum_t}. \pk{Therefore, again using the upper bounds derived in Lemmas \ref{lem_finite_descent1}, \ref{lem_finite_descent2} and \ref{lem_finite_descent3}} in \eqref{eq_upbd_Phi_online_sum_t}, we get \eqref{eq_lemma_online_descent}.
\end{proof}
\subsection{Proof of Theorem \ref{thm_conv_online_sum}}
\label{app_thm_conv_online_sum}
\begin{proof} We have
\begin{align}
    & \min_{s,t} \left[ \mathbb{E} \left\| \nabla f \left( \bar{\mathbf{x}}_{t}^{s+1} \right) \right\|^2 + \frac{1}{N} \sum_{i=1}^N \mathbb{E} \left\| \mathbf{x}_{i,t}^{s+1} - \bar{\mathbf{x}}_{t}^{s+1} \right\|^2 \right] \nonumber \\
    & \quad \leq \frac{1}{T} \sum_{s=0}^{S-1} \sum_{t=0}^{m-1} \left[ \mathbb{E} \left\| \nabla f \left( \bar{\mathbf{x}}_{t}^{s+1} \right) \right\|^2 + \left( 1 - L \gamma - \left[ \frac{128 \gamma^{4} L^4 m}{N B \delta^2} + \frac{4 \gamma^{2} L^2 m}{N B} + \frac{512 \gamma^{4} L^4}{\delta^4} \right] \right) \mathbb{E} \left\| \bar{\mathbf{v}}_{t}^{s+1} \right\|^2 \right] \label{eq_final_bd_online_2} \\
    & \quad \leq \frac{2 \left( f(\mathbf{x}^0) - f_{\ast} \right)}{T \gamma} + \frac{2 \sigma^2}{N n_b}. \label{eq_final_bd_online_3}
\end{align}
Here, \eqref{eq_final_bd_online_2} follows from \eqref{eq_total_avg_x_avgx_3} and \eqref{eq_coeff_avg_v}; \eqref{eq_final_bd_online_3} follows from \eqref{eq_favg_online_over_all_epcohs}.
\end{proof}
	
\end{document}